\subjclass{Primary: 57S05, 55R10, 19D10, 55R12. Secondary:  55R15, 57N65.}
\newtheorem{thm}{Theorem}[section]  
\newtheorem*{un-no-thm}{Theorem}
\newtheorem{cor}[thm]{Corollary}     
\newtheorem{lem}[thm]{Lemma}         
\newtheorem{prop}[thm]{Proposition}  
\newtheorem*{cl}{Claim}
\newtheorem{bigthm}{Theorem}
\newtheorem{bigcor}[bigthm]{Corollary}
\theoremstyle{definition}
\newtheorem{defn}[thm]{Definition}   
\theoremstyle{definition}
\theoremstyle{definition}
\theoremstyle{remark}
\newtheorem{rem}[thm]{Remark}        
\newtheorem*{acks}{Acknowledgements}
\newtheorem{rems}[thm]{Remarks}
\newtheorem{ex}[thm]{Example}
\begin{document}
\title[The refined transfer]{The refined transfer, bundle structures and algebraic $K$-theory}
\date{\today}
\author{John R.\ Klein}
\address{Wayne State University, Detroit, MI 48202}
\email{klein@math.wayne.edu}
\author{Bruce Williams}
\address{University of Notre Dame,
Notre Dame, IN 46556 } 
\email{williams.4@nd.edu}
\begin{abstract}
We give new homotopy theoretic criteria
for deciding when a fibration with homotopy finite fibers admits
a reduction to a fiber bundle with compact topological
manifold fibers. The criteria lead to an
unexpected result about homeomorphism groups of manifolds.
A tool used in the proof   
is a surjective splitting of the assembly map 
for Waldhausen's functor $A(X)$.

We also give concrete examples of fibrations
having a reduction to 
a fiber bundle with compact topological
manifold fibers but  which
fail to admit a compact fiber smoothing. 
The examples are detected by
algebraic $K$-theory invariants.

We consider a refinement
of the Becker-Gottlieb transfer.
We show that a version of the axioms described by Becker and Schultz
uniquely determines the refined transfer for
the class of fibrations admitting a reduction to a fiber bundle
with compact topological manifold fibers.

In an appendix,
we sketch a theory of characteristic classes for
fibrations. The classes are primary obstructions to finding a
compact fiber smoothing.
\end{abstract}
\maketitle
\setlength{\parindent}{15pt}
\setlength{\parskip}{1pt plus 0pt minus 1pt}

\def\Top{\bold T\bold o \bold p}
\def\Sp{\text{\bf Sp}}
\def\vo{\varOmega}
\def\vs{\varSigma}
\def\smsh{\wedge}
\def\flush{\flushpar}
\def\id{\text{id}}
\def\dbslash{/\!\! /}
\def\codim{\text{\rm codim\,}}
\def\:{\colon}
\def\holim{\text{holim\,}}
\def\hocolim{\text{hocolim\,}}
\def\Bbb{\mathbb}
\def\bold{\mathbf}
\def\Aut{\text{\rm Aut}}
\def\cal{\mathcal}
\def\sec{\text{\rm sec}}
\def\gda{G\text{\rm -}\delta\text{\rm -}\alpha}

\setcounter{tocdepth}{1}
\tableofcontents
\addcontentsline{file}{sec_unit}{entry}

\section{Introduction}\label{intro}
Let 
$$
p\: E \to B
$$
be a fibration whose base space $B$ and
whose fibers have the homotopy type of a finite
complex.
The transfer construction of Becker and Gottlieb 
\cite{Becker-Gottlieb} associates
to $p$
a ``wrong way'' stable homotopy class 
$$
\chi(p)\:B_+ \to E_+
$$ 
such that the assignment $p \mapsto \chi(p)$ is
homotopy invariant and natural with respect to base change
(here $B_+$ denotes $B$ with the addition of a disjoint basepoint).
The transfer has shown itself to be an important tool in 
algebraic topology. For example, one of its early applications
was a simple proof of the Adams Conjecture \cite{Becker-Gottlieb_adams}.


A refinement of the transfer, also considered by Becker and Gottlieb
\cite[top of p.\ 115]{Becker-Gottlieb}, has recently surfaced
in the Dwyer, Weiss and Williams 
approach to fiberwise smoothing problems and the theory of higher 
Reidemeister torsion 
(see \cite{Dwyer-Weiss-Williams},\cite{Badzioch-Dora}, 
\cite{Igusa_book}).   

Let $E^+$ denote the disjoint union $E\amalg B$.
Then $E^+$ is a retractive space over $B$. The category 
of such spaces is the subject of fiberwise homotopy theory
(cf.\ \cite{Crabb-James}).
The associated stable homotopy category is thus
the study of  fiberwise stable phenomena (cf.\ \cite{May-Sigurdsson}).

The  {\it refined transfer} of $p$ is a certain fiberwise stable
homotopy class 
$$
t(p) \: B^+ \to E^+\, .
$$
The Becker-Gottlieb transfer $\chi(p)$ is obtained from 
$t(p)$ by collapsing the preferred sections $B\to E^+$ and $B\to B^+$
to a point.

Becker and Schultz \cite{Becker-Schultz} gave an axiomatic characterization
of  
the Becker-Gottlieb transfer
under the assumption that the fibration $p$ is fiber homotopy
equivalent to a fiber bundle with compact topological manifold fibers.
Their axioms involve naturality, normalization,
compatibility with products and additivity of the transfer.

\begin{defn}
If $p\: E \to B$
admits a fiber homotopy equivalence to a fiber bundle
with compact topological manifold fibers, 
then $p$ is said to have a {\it compact
\rm TOP \it reduction}.
 
If $p$ is fiber homotopy equivalent
to a fiber bundle with compact  smooth manifold fibers,
then $p$ is said to have 
a {\it compact \rm DIFF \it reduction} or a {\it compact fiber smoothing}.
\end{defn}

\begin{rem}
The fibers
of these bundles are permitted to have non-empty boundary.
Our terminology in the smooth case differs from
that of Casson and Gottlieb \cite{Casson-Gottlieb}, who
instead use the term {\it closed fiber smoothing.} 
Our preference is to use `compact' instead of `closed' 
so as to avoid potential confusion.
\end{rem}

The following, communicated to
us by Goodwillie, gives fibrations with
homotopy finite fibers which fail to admit a compact
TOP reduction.

\begin{ex} \label{good_ex} Let $F$ be a connected based finite complex 
equipped with a based 
self homotopy equivalence $\theta\: F\to F$. Assume $\theta$
induces the identity map on fundamental groups and has
non-trivial Whitehead torsion. Then the mapping torus
$F\times_{\theta} S^1 \to S^1$, converted into a fibration,
does not admit a compact TOP reduction.
\end{ex}

The example is verified by contradiction.
A compact TOP reduction would  
yield a homotopy equivalence $k\: F\to M$, with $M$ 
a compact topological manifold,
and a homotopy inverse $k^{-1}\: M \to F$ such that the
composite $k\circ \theta\circ k^{-1} \: M \to M$ 
is homotopic to a homeomorphism.
But this would show that the torsion of 
$k\circ \theta \circ k^{-1}$ is trivial \cite[th.\ 1]{Chapman_invariance}. 
Since $\theta$ induces the identity on $\pi_1$,
the composition 
formula \cite[22.4]{Cohen_simple}, 
shows that the torsion of $\theta$ is also trivial. This
gives the contradiction. For specific  homotopy 
equivalences $\theta$
satisfying  \ref{good_ex}, see \cite[24.4]{Cohen_simple}.
\medskip

The main result of this paper is to give explicit homotopy theoretic criteria for
deciding when a fibration admits
a compact TOP reduction. Our approach is to use the recent work of Dwyer, Weiss and Williams,
specifically, the ``Converse Riemann-Roch Theorem'' which gives an abstract characterization
when such a reduction exists \cite{Dwyer-Weiss-Williams}, and
entails an understanding of how the refined transfer relates to Waldhausen's algebraic 
$K$-theory of spaces. Along the way, we will extend the Becker-Schulz axioms to the fiberwise
setting and show how the axioms characterize the refined transfer for those fibrations
admitting a compact TOP reduction. 
The proof of this characterization follows along the lines of Becker-Schulz, 
and we do not claim any originality in this direction.
As to whether the axioms characterize the refined transfer
for all fibrations with homotopy finite fibers is an interesting open question.

The axiomatic characterization of the refined transfer is independent of the rest of the paper
and is included because of Igusa's recent progress on axiomatizing higher Reidemeister
torsion invariants \cite{Igusa_axioms}. 
There is a close relationship between higher torsion and the refined
transfer: when the fibration is fiber homotopy equivalent to a smooth
fiber bundle with compact fibers, then the refined transfer
admits a lift into a group closely associated
with algebraic $K$-theory, and  this lift coincides with 
the higher torsion invariant of Dwyer, Weiss and Williams.
A currently unsolved problem is to determine whether the
Dwyer-Weiss-Williams torsion
coincides with Igusa's torsion. The problem would be
solved if one could verify that Igusa's axioms hold for
the Dwyer-Weiss-Williams torsion.
Some evidence in favor of this is that the axioms
we will shortly give for the refined transfer
seem to be close in spirit to Igusa's axioms, although further effort will be needed to 
pin down the exact relationship.

Here are the axioms.

\begin{defn} 
A {\it refined transfer}
is a rule, which assigns to fibrations
$p\: E \to B$ with homotopy finite base and fibers,
a fiberwise stable
homotopy class 
$$
t(p)\: B^+ \to E^+
$$
that is
subject to the following axioms:
\begin{itemize}
\item {\bf A1} (Naturality).  For commutative homotopy pullback diagrams
$$
\xymatrix{
E' \ar[r]^{\tilde f}\ar[d]_{p'} & E\ar[d]^{p}\\
B' \ar[r]_{f} & B
}
$$
in which $p'$ and $p$ are fibrations, we have
$$\tilde f^+\circ f_\ast t(p') \,\, = \,\, t(p)\circ f^+\, ,$$
where 
$f^+$
denotes  $f\amalg \id_B$, $\tilde f^+$ denotes $\tilde f \amalg \id_B $ and
$f_\ast t(p')$ is the effect of making $t(p')$ into a  fiberwise stable homotopy class
 $B$ by taking cobase change along $f$.

\item {\bf A2} (Normalization). Let $1\: B\to B$ be the identity. Then 
$$
t(1)\: B^+ \to B^+
$$
is the identity.

\item {\bf A3} (Products). For a product fibration $p\times p'\: E\times E' \to B \times B'$, we have
$$
t(p\times p')\,\,  =\,\,  t(p)\, \smsh \, t(p')\, . 
$$
where $\smsh$ means external fiberwise smash product.

\item {\bf A4} (Additivity). If  
$$
\xymatrix{ 
E_{\emptyset} \ar[r]^{j_2} \ar[d]_{i_1} & E_2 \ar[d]^{i_2} \\
E_1 \ar[r]_{j_1} & E
}
$$
is a commutative homotopy pushout diagram 
of fibrations over $B$, then 
$$
t(p) = (j_1)_* t(p_1) + (j_2)_* t(p_2) - (j_\emptyset)_*t(p_{\emptyset})\, ,
$$
where for  $S\subsetneq \{1,2\}$, 
$p_S \: E_S \to B$ denotes the projection and
$(j_S)_*\: E^+_S \to  E^+$ is the evident map.
\end{itemize}
\end{defn}

In Section \ref{construct}, we explain Becker and Gottlieb's construction of
a refined transfer. Their version will be called {\it the} refined transfer, employing
the definite article to distinguish it from other constructions satisfying the axioms.

\begin{bigthm} \label{first} Let $t$ and $t'$ be 
refined transfers defined on the class of fibrations
having homotopy finite fibers.
Then $t = t'$ for those fibrations which admit a compact {\rm TOP} reduction.
\end{bigthm}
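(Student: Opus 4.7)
The plan is to adapt the Becker--Schultz uniqueness argument to the fiberwise setting. By A1 applied to a fiber homotopy equivalence covering $\id_B$ (which trivially gives a homotopy pullback square), any refined transfer is a fiber homotopy invariant, so we may assume $p\: E \to B$ is itself a locally trivial fiber bundle with compact topological manifold fibers. The strategy is to show that axioms A1--A4 completely determine $t(p)$ once the refined transfer is known in the essentially unique base case of a fibration with contractible fibers, which in turn is pinned down by A1 and A2.

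The induction is over a handle decomposition of the fiber. For the base case, any fibration with contractible fibers is fiber homotopy equivalent to $\id_B$, so by A1 and A2 both $t(p)$ and $t'(p)$ equal the identity map $B^+ \to B^+$; by A3 this also determines the refined transfer on trivial product bundles $B \times M \to B$ once it is known on $M \to \ast$. Inductively, write the fiber as $M = M_0 \cup (D^k \times D^{n-k})$ attached along $A \cong S^{k-1} \times D^{n-k}$; restricting the bundle to each piece yields a homotopy pushout of fibrations over $B$. Applying A4 gives
\begin{equation*}
t(p) \,=\, (j_0)_* t(p_0) \,+\, (j_h)_* t(p_h) \,-\, (j_a)_* t(p_a)\, ,
\end{equation*}
where $p_0$, $p_h$, $p_a$ denote the restrictions to the sub-bundles with fibers $M_0$, the handle, and the attaching region, respectively. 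The handle restriction has contractible fibers and so is determined by the base case; the attaching region deformation-retracts to $S^{k-1}$, which is in turn handled by A4 applied to $S^{k-1} = D^k_+ \cup_{S^{k-2}} D^k_-$ together with a lower-dimensional induction. Iterating over the handles of $M$ thus expresses $t(p)$ entirely in terms of axiomatically determined pieces, forcing $t(p) = t'(p)$.

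The main obstacle is that a handle decomposition of $M$ is not in general preserved by the structure group $\Top(M)$, so the fiber decomposition does not directly globalize to a homotopy pushout decomposition of $E$ over $B$. I would circumvent this by first reducing the TOP bundle to a block bundle, whose structure group admits an ambient handle decomposition up to concordance, and then running the induction fiberwise over a trivializing cover of $B$, with A1 and A4 used to assemble the local contributions into a global formula. Low-dimensional pathologies (notably the absence of handle decompositions in dimension $4$) can be sidestepped by stabilizing the fiber to $M \times D^2$ and invoking A3. A secondary technicality is the homotopy finiteness hypothesis on base and fibers: all intermediate pushouts must remain of finite type, which can be arranged by classifying $p$ over a finite skeleton of $B\Top(M)$ and performing the induction skeleton-by-skeleton.
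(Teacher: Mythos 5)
Your base case and the reduction to an honest TOP bundle via A1 are fine, but the inductive step contains a genuine gap. You correctly identify the obstacle---a handle decomposition of the fiber $M$ is not preserved by the structure group, so a handle of $M$ does not extend to a sub-fibration of $E$ over all of $B$, and A4 cannot be applied over $B$---but the proposed fix does not resolve it. Passing to a block bundle does not change the structure group in any way that helps, and more importantly the axioms A1--A4 contain no mechanism for assembling transfers over pieces of a trivializing cover of $B$ into a transfer over $B$. A1 governs restriction along homotopy pullbacks and A4 governs pushout decompositions of the \emph{total space} over a fixed base $B$; neither provides a Mayer--Vietoris principle in the $B$-direction, and your plan to ``run the induction skeleton-by-skeleton'' and ``assemble the local contributions'' presupposes exactly such a principle.

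The paper's proof (following Becker--Schultz) circumvents this by choosing a decomposition that is canonical and hence \emph{does} globalize. One replaces $p$ by a TOP bundle $q\colon W \to B$ together with a fiberwise codimension-zero embedding $W \subset B \times \mathbb{R}^d$, then decomposes the fiberwise product $W \times_B W$ into the compact tubular neighborhood of the fiberwise diagonal and the closure of its complement. Because the diagonal and its tube are intrinsic to the bundle, this really is a homotopy pushout of fibrations (over $W /\!\!/ \partial^v W$, and hence over $B$), and A4 applies. Combined with the fiberwise Thom--Pontryagin collapse $\alpha$ of the embedding (handled via A1--A3) and a fiberwise collapse map $c$ determined by the tube, one gets the identity $\Sigma^d_B t(q) \simeq c \circ \Delta \circ \alpha$, whose right-hand side does not mention $t$, forcing $t = t'$. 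The moral is that the decomposition you should feed to A4 is not a handle decomposition of the fiber but the geometric decomposition of the fiberwise square along the diagonal; the former does not vary continuously in the base, while the latter does.
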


We now give  homotopy
theoretic criteria for deciding when 
a fibration admits a compact TOP reduction. One should regard this
as the main result of the paper.

\begin{bigthm} \label{second} Let $p \: E \to B$ be a fibration
with homotopy finite base and fibers. Assume 
\begin{itemize}
\item $p$ comes equipped with a section,
\item $p$ is  $(r+1)$-connected and
\item $B$ has the homotopy
type of a cell complex of dimension $\le 2r$.
\end{itemize} 
Then $p$  admits a preferred compact {\rm TOP} reduction.
\end{bigthm}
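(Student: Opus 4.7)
The plan is to invoke the ``Converse Riemann-Roch Theorem'' of Dwyer, Weiss and Williams. That theorem gives an obstruction-theoretic criterion for the existence of a compact TOP reduction: $p$ admits such a reduction if and only if a parametrized Euler class (built from the refined transfer $t(p)$ and living in a bivariant form of Waldhausen's $A$-theory) lifts through an assembly map of the form
$$
\alpha \: h^B(A(F)) \to A^B(p)\, ,
$$
and moreover the space of compact TOP reductions is parametrized by the space of such lifts. It therefore suffices, under the stated hypotheses, to produce a preferred lift of the Euler class.

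First I would use the given section of $p$ to choose basepoints in each fiber, so that the assumption that $p$ is $(r+1)$-connected translates into $r$-connectivity of the fiber $F$. The main input is then the surjective splitting of the $A$-theory assembly map that is featured in the abstract: such a splitting furnishes a canonical section of $\alpha$ and so, in particular, a canonical lift of the Euler class of $p$. This already establishes the existence half of the statement.

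To see that the lift is \emph{preferred} in a way consistent with the refined transfer, the dimension hypothesis $\dim B \le 2r$ is essential. The ambiguity in the splitting-induced lift is controlled by an obstruction theory with coefficients in the homotopy groups of $\operatorname{cofib}\alpha$; the $r$-connectivity of $F$ forces these coefficients to vanish below a dimension of order $2r$, and the dimension bound on $B$ then places every potential obstruction outside the accessible range. Hence the lift is canonical up to contractible choice, producing a preferred compact TOP reduction.

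The main obstacle I anticipate is aligning the precise connectivity estimate for $\operatorname{cofib}\alpha$ with the bound $\dim B \le 2r$, since the metastable range $2r$ is not generous and permits no slack. This is where the explicit construction of the surjective splitting of assembly must be used in earnest, rather than as a black box, to give a tight matching between the cofiber's connectivity and the cell structure of $B$. Once this numerical matching is in place, the combination of the splitting with the Dwyer--Weiss--Williams converse Riemann-Roch theorem delivers the theorem.
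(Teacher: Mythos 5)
Your proposal follows the same route as the paper's proof: take the section to equip each fiber with a basepoint, pass to the fiberwise assembly map $A^\%_B(E) \to A_B(E)$, invoke the $2r$-range splitting of assembly from Theorem~\ref{split} applied fiberwise, use the dimension bound $\dim B \le 2r$ to lift the fiberwise Euler characteristic section of $A_B(E) \to B$ through the assembly map, and then conclude via the Converse Riemann-Roch Theorem of Dwyer, Weiss and Williams (cor.\ 10.18 of that paper). One point to tighten: the splitting of Theorem~\ref{split} is not an actual section of the assembly map, only a natural $2r$-range splitting, so it does not ``furnish a canonical section of $\alpha$'' globally; what it furnishes is a functorial map $\mathcal{B}_B(E) \to A^\%_B(E)$ whose composite to $A_B(E)$ is $2r$-connected, which over a $2r$-dimensional base is exactly enough to produce a lift of sections on $\pi_0$. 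Also, the claim that the resulting lift is ``canonical up to contractible choice'' overshoots: with the composite only $2r$-connected and $\dim B \le 2r$, obstruction theory gives existence of a lift but not contractibility of the space of lifts (that would need one more degree of connectivity). The ``preferred'' in the statement instead comes from the explicit naturality of the splitting construction, not from a uniqueness assertion.
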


\subsection*{Consequences}
Combining Theorems \ref{first} and \ref{second}, we
immediately obtain

\begin{bigcor} Let $t$ and $t'$ be refined transfers. The $t = t'$
for the fibrations  
appearing in Theorem \ref{second}.
\end{bigcor}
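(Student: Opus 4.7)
The plan is essentially to chain together the two preceding theorems; there is no real obstruction here, as the corollary is an immediate formal consequence.

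First I would fix a fibration $p\: E \to B$ satisfying the hypotheses of Theorem \ref{second}: namely, $B$ and the fibers are homotopy finite, $p$ is $(r+1)$-connected, admits a section, and $B$ has the homotopy type of a cell complex of dimension $\le 2r$. Applying Theorem \ref{second} directly produces a (preferred) compact TOP reduction of $p$; in particular, $p$ lies in the class of fibrations that are fiber homotopy equivalent to a fiber bundle with compact topological manifold fibers.

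Next I would invoke Theorem \ref{first}. Given any two refined transfers $t$ and $t'$ (i.e.\ rules satisfying axioms \textbf{A1}--\textbf{A4} defined on all fibrations with homotopy finite base and fibers), that theorem asserts the equality $t(p) = t'(p)$ whenever $p$ admits a compact TOP reduction. Combined with the previous step, this yields $t(p) = t'(p)$ as fiberwise stable homotopy classes $B^{+} \to E^{+}$ for every such $p$, which is precisely the statement of the corollary.

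The only point worth flagging is a small bookkeeping one: Theorem \ref{second} requires both $B$ and the fibers to be homotopy finite, which is already part of the hypothesis for a refined transfer to be defined on $p$, so there is no mismatch of hypotheses between the two theorems. Accordingly the corollary requires no additional argument beyond citing Theorems \ref{second} and \ref{first} in sequence.
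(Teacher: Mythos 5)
Your proposal is correct and matches the paper's treatment exactly: the paper states the corollary follows by "combining Theorems \ref{first} and \ref{second}," which is precisely the two-step chaining you describe. No further comment is needed.
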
 

Here is a way to construct examples 
satisfying Theorem \ref{second}. Start with any
Hurewicz fibration $p\: E \to B$ with 
homotopy finite base and fibers. 
The {\it (unreduced) fiberwise suspension} of $p$ is the
fibration $S_B p\:S_B E\to B$ whose total space is the double mapping cylinder
of the map $p$:
$$
S_B E = (B \times 0) \cup (E\times [0,1])\cup (B\times 1)
$$
(cf.\ \cite{Strom}).
The fiber of $S_B p$ at $b\in B$ is given by the unreduced suspension 
of the fiber of $p$ at $b$. Consequently, the connectivity of the map
$S_B p$ is one more than that of $p$, so iteration of the fiberwise suspension
construction eventually yields a fibration which satisfies the conditions of Theorem \ref{second}.

\begin{bigcor} Stably, any fibration $p\:E\to B$ with 
homotopy finite base and fibers  admits a compact 
{\rm TOP} reduction. That is, there is an iterated fiberwise suspension
$S_B^j p\: S_B^j E \to B$ which admits a compact {\rm TOP} reduction.
\end{bigcor}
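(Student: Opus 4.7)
The plan is to apply Theorem \ref{second} to an iterated fiberwise suspension $S_B^j p$ for $j$ chosen sufficiently large. Since $B$ is homotopy finite, I may assume it has the homotopy type of a finite CW complex of some dimension $d$. Set $r = \lceil d/2 \rceil$, so that $\dim B \le 2r$; the value of $r$ is then fixed, depending only on $B$, and my task reduces to verifying the remaining two hypotheses of Theorem \ref{second} for $S_B^j p$ once $j$ is large.

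For the section hypothesis, the double mapping cylinder description exhibits two canonical copies of $B$ in $S_B E$, namely $B \times \{0\}$ and $B \times \{1\}$, and the inclusion of either is visibly a section of $S_B p$. By iteration, $S_B^j p$ inherits a preferred section for every $j \ge 1$. For the connectivity hypothesis, the fiber of $S_B p$ over $b$ is the unreduced suspension of the fiber of $p$ over $b$, and unreduced suspension of an $n$-connected space is $(n+1)$-connected; hence the connectivity of $S_B^j p$ equals the connectivity of $p$ plus $j$. Choosing $j$ large enough that this quantity is at least $r+1$ makes $S_B^j p$ satisfy the connectivity condition of Theorem \ref{second}.

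I also need to check that $S_B^j p$ still has homotopy finite fibers; this is immediate since the unreduced suspension of a finite CW complex is a finite CW complex, and the base $B$ is unchanged by fiberwise suspension. All three hypotheses of Theorem \ref{second} are now satisfied, so that theorem produces a preferred compact TOP reduction of $S_B^j p$, which is the claimed conclusion.

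There is no serious obstacle here: the substance of the corollary is carried entirely by Theorem \ref{second}, and the present argument is essentially the observation that fiberwise suspension is a cheap way to simultaneously manufacture a section and boost connectivity, while preserving both homotopy finiteness of the fibers and the base (and hence its dimension). The only thing to be a little careful about is the requirement in Theorem \ref{second} that $B$ have the homotopy type of a cell complex of dimension $\le 2r$; this is why one pairs a choice of CW model for $B$ with the choice of $r = \lceil \dim B/2 \rceil$ at the outset, rather than trying to adjust $r$ as $j$ grows.
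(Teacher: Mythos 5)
Your proof is correct and follows essentially the same route the paper sketches in the paragraph preceding the corollary: iterate fiberwise suspension to simultaneously manufacture a section (from the two copies of $B$ in the double mapping cylinder) and raise the connectivity of the fibration by one each time, while the base, its dimension, and homotopy finiteness of the fibers are unchanged, then invoke Theorem \ref{second}. The only point worth flagging is that the paper leaves this as an implicit observation rather than writing it out, so your explicit accounting of the choice $r = \lceil \dim B/2 \rceil$ and the verification of all three hypotheses is a faithful fleshing-out of the intended argument.
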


The method of proof of Theorem \ref{second} 
yields a new and unexpected result about automorphism
groups of manifolds. For a compact connected manifold $M$ with
basepoint $*$ in its interior,
let $\text{TOP}(M,*)$ be the simplicial group whose 
$k$-simplices are the homeomorphisms of $\Delta^k \times M$
which commute with projection to $\Delta^k$ and 
which are the identity when restricted to $\Delta^k\times *$. 
Let $G(M,*)$ be defined similarly, using homotopy equivalences
in place of homeomorphisms. The forgetful homomorphism induces
a map of classifying spaces
$$
B\text{TOP}(M,*) \to BG(M,*)
$$ 
The surprise will be that
this map has a section up to homotopy along the $2r$-skeleton
of $BG(M,*)$ when  $M \subset {\Bbb R}^m$ 
is an $r$-connected compact codimension zero manifold
with a sufficiently low dimensional spine (the exact dimensions will
be spelled out in Section \ref{TtoE}).

More precisely, define the {\it stable homeomorphism group} 
$$
\text{TOP}^{\text{st}}(M,*)\, ,
$$
to be
colimit of $\text{TOP}(M\times I^k,*)$
via stabilization given by taking cartesian product with 
the unit interval. 

Similarly, one can define $G^{\text{st}}(M,*)$, but in
this case the associated inclusion 
$G(M,*) \to G^{\text{st}}(M,*)$ is a homotopy equivalence.
It follows that one has
a map of classifying spaces
$$
B\text{TOP}^{\text st}(M,*) \to BG(M,*) \, .
$$

\begin{bigthm}\label{decomp} 
Let $M\subset {\Bbb R}^m$ be a compact codimension zero
smooth submanifold. Assume $M$ is $r$-connected.

Then there is 
a space $X_M$ and a map $X_M \to B\text{\rm TOP}^\text{{\rm st}}(M,*)$
such that the composite
$$
X_M \to B\text{\rm TOP}^\text{{\rm st}}(M,*) \to BG(M,*)
$$ 
is $2r$-connected.

Furthermore, there is a preferred decomposition of homotopy groups
$$
\pi_*(\text{\rm TOP}^\text{{\rm st}}(M,*)) \cong \pi_*(G(M,*)) \oplus 
\pi_*(\text{\rm map}(M,\text{\rm TOP}))
\oplus \pi_{*+1}(\text{\it Wh}^{\text{\rm top}}(M))
$$
which is valid in degrees
$
* \le 2r-2
$. 
\end{bigthm}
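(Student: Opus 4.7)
The plan is to apply Theorem \ref{second} to the universal $M$-fibration over $BG(M,*)$, restricted to its $2r$-skeleton, and then combine the resulting lift with the well-known fiber sequence relating $\text{TOP}^{\text{st}}(M,*)$, $G(M,*)$, and the Whitehead space.

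First I would set up the universal object. Let $p_M \: E_M \to BG(M,*)$ denote the universal $G(M,*)$-fibration, whose fiber has the homotopy type of $M$. Since $M$ is pointed, $p_M$ comes equipped with a preferred section $\sigma_M$ (the basepoint section). The assumption that $M$ is $r$-connected implies that the fiber of $p_M$ is $r$-connected, hence $p_M$ is $(r+1)$-connected as a map. Let $X_M := BG(M,*)^{(2r)}$ denote the $2r$-skeleton, so that the restriction $p_M|_{X_M}$ satisfies exactly the hypotheses of Theorem \ref{second}: it has a section, is $(r+1)$-connected, and its base is of dimension $\leq 2r$. Theorem \ref{second} then produces a preferred compact TOP reduction of $p_M|_{X_M}$, which is classified by a map $f_M \: X_M \to B\text{TOP}(M,*)$. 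Post-composing with the stabilization $B\text{TOP}(M,*) \to B\text{TOP}^{\text{st}}(M,*)$ yields the desired map $X_M \to B\text{TOP}^{\text{st}}(M,*)$.

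Next I would verify the connectivity assertion. The composite $X_M \to B\text{TOP}^{\text{st}}(M,*) \to BG(M,*)$ classifies the underlying $G$-fibration of the compact TOP reduction, which by construction is $p_M|_{X_M}$ itself. Hence, up to homotopy, this composite is the skeletal inclusion $X_M \hookrightarrow BG(M,*)$, and so is $2r$-connected by cellular approximation.

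For the homotopy-group decomposition, I would invoke the fiber sequence
\[
F \longrightarrow \text{TOP}^{\text{st}}(M,*) \longrightarrow G(M,*)
\]
coming from the forgetful map, where $F$ denotes the homotopy fiber. Surgery theory combined with Waldhausen's identification of $A(M)$ (and its topological Whitehead space), together with the stability theorems for concordances that are available once $M$ is in a high enough stable range, show that in the range $* \leq 2r-2$ one has a natural weak equivalence
\[
F \;\simeq\; \text{map}(M,\text{TOP}) \,\times\, \Omega\, \text{\textit{Wh}}^{\text{top}}(M).
\]
Looping the lift constructed above produces a homotopy section of $\text{TOP}^{\text{st}}(M,*) \to G(M,*)$ on homotopy groups in degrees $* \leq 2r - 1$, which together with the splitting of $F$ gives the claimed direct sum decomposition. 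The main obstacle is the last step: one must extract the identification of $F$ in the stated range from the literature (Dwyer--Weiss--Williams, Weiss--Williams, and Waldhausen's stability results), and verify that the section obtained by looping the compact TOP reduction is compatible with the projection onto the $\pi_*(G(M,*))$ factor so that the resulting decomposition is indeed a direct sum in the claimed range.
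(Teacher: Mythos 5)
Your outline captures the rough shape of the argument (use the stable splitting of assembly to section the forgetful map, then identify what sits in the fiber), but there are two genuine gaps, both hiding exactly where the real work is done in the paper.

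First, you cannot simply apply Theorem~\ref{second} to the universal fibration over the $2r$-skeleton of $BG(M,*)$ and expect to land in $B\text{TOP}(M,*)$. The converse Riemann--Roch theorem produces \emph{some} fiber bundle with compact topological manifold fibers that is fiber homotopy equivalent to the given fibration; it does not say the fiber is $M$ (or $M \times D^k$), and it does not give a based reduction. Without controlling the homeomorphism type of the fiber, the ``classifying map'' $f_M$ has no well-defined target space $B\text{TOP}(M,*)$, and the post-composition with stabilization $B\text{TOP}(M,*) \to B\text{TOP}^{\text{st}}(M,*)$ never gets off the ground. The paper avoids this entirely: it does not invoke Theorem~\ref{second} as a black box but rather runs the fiberwise assembly-map splitting (``the method of proof of Theorem~\ref{second}'') over a universal base $B{\cal E}(M,*)$ built from the monoid ${\cal E}(M,*)$ of self-embeddings of $M$ rel $*$. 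The homotopy fiber of $B\text{TOP}(M,*) \to B{\cal E}(M,*)$ is identified concretely as the $h$-cobordism space ${\cal H}(\partial M)$, and the relevant square in $A$-theory is shown to be $2r$-cartesian after stabilization. This gives a $2r$-splitting of $B\text{TOP}(M,*)\to B{\cal E}(M,*)$ directly, with no ambiguity about what bundle one is classifying.

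Second, your identification of the fiber $F$ of $\text{TOP}^{\text{st}}(M,*) \to G(M,*)$ as $\text{map}(M,\text{TOP}) \times \Omega\,\text{\it Wh}^{\text{top}}(M)$ is asserted, not proved, and in fact the $\text{map}(M,\text{TOP})$ factor is invisible from your setup. That factor arises because the natural intermediate object is ${\cal E}(M,*)$, not $G(M,*)$: by topological transversality and immersion theory (Lashof), ${\cal E}(M,*) \simeq G(\tau_M,*) \simeq G(M,*) \times \text{map}(M,\text{TOP}_m)$ in the stable range, using that $\tau_M$ is trivial for a codimension-zero $M \subset {\Bbb R}^m$. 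The $\Omega\,\text{\it Wh}^{\text{top}}(M)$ factor similarly requires the chain ${\cal H}(\partial M) \to \Omega\,\text{\it Wh}^{\text{top}}(\partial M) \to \Omega\,\text{\it Wh}^{\text{top}}(M)$, where the first map is an equivalence in the concordance stable range (after stabilizing $M$ so $\partial M$ is highly connected) and the second is highly connected because $\partial M \to M$ is. Working directly over $BG(M,*)$, you would need an independent argument that $F$ splits as a product in the range $* \le 2r-2$; the fiber sequence alone does not give this, and ``surgery theory combined with Waldhausen'' is not a substitute for the immersion-theoretic identification. In short, the key lemma your proposal is missing is the factorization through $B{\cal E}(M,*)$ and the two identifications (Lashof and concordance stability) that unpack its homotopy type.
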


In the above,
$\text{\it Wh}^{\text{\rm top}}(M)$
is the topological Whitehead space (\cite[\S3]{Wald_LNM},
\cite{Hatcher}), ${\rm TOP}$  is the group of homeomorphisms of 
euclidean space stabilized with respect to dimension, and 
$\text{map}(M,\text{\rm TOP})$ is the function space of maps
$M \to \text{\rm TOP}$.

\subsection*{Examples}
We now give
examples of fibrations which fail
to have a compact fiber smoothing but which do
admit a compact TOP reduction.

\begin{bigthm} \label{third} There exist  fibrations $p\:E \to B$ 
which admit a compact {\rm TOP} reduction
but which do not have a compact fiber smoothing.

The fibers of these fibrations have the homotopy type of 
a finite wedge of spheres $\vee_k S^n$, for suitable choice of $k$ and $n$.

Furthermore, these examples are detected in 
the rationalized algebraic $K$-theory of the integers.
\end{bigthm}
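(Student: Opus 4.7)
The plan is to use the Dwyer--Weiss--Williams smoothing theorem to translate the existence of a compact fiber smoothing into a lifting problem in Waldhausen's algebraic $K$-theory of spaces, and then to obstruct that lift rationally using Borel classes in $K_*(\mathbb{Z}) \otimes \mathbb{Q}$.

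First, I would recall from DWW that when a fibration $p \colon E \to B$ with homotopy finite fibers admits a compact TOP reduction, the further existence of a compact fiber smoothing is controlled by whether a refined Euler class in the parametrized $A$-theory of $E$ lifts through the smooth assembly map $Q((E/B)_+) \to A(E)$ fiberwise. The cofiber of this assembly is the smooth Whitehead spectrum $\text{Wh}^{\text{DIFF}}(E)$, which by Waldhausen's splitting $A(X) \simeq Q(X_+) \times \text{Wh}^{\text{DIFF}}(X)$ is rationally detected by $K(\mathbb{Z}) \otimes \mathbb{Q}$. Borel's theorem provides nontrivial classes in $K_{4k+1}(\mathbb{Z}) \otimes \mathbb{Q}$ for each $k \ge 1$, which will supply the distinguishing invariants.

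For the construction, fix $k \ge 1$ and take $B = S^{4k+1}$. Let $M$ be a smooth compact handlebody obtained as a boundary-connected sum of copies of $S^n \times D^{m-n}$, so that $M \simeq \vee_j S^n$ for some $j$, with $m$ and $m-n$ chosen large enough relative to $4k+1$ to place us in the concordance-stable range where the relevant $K$-theoretic computations apply. Using the surjective splitting of the $A$-theory assembly map established elsewhere in this paper together with the converse Riemann--Roch theorem of DWW, one can produce a compact TOP $M$-bundle $p \colon E \to S^{4k+1}$ whose DWW smoothing obstruction pairs non-trivially with a Borel class $\beta \in K_{4k+1}(\mathbb{Z}) \otimes \mathbb{Q}$. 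By construction, $p$ has a compact TOP reduction (namely the bundle itself); were $p$ also to admit a compact fiber smoothing, the DWW obstruction would vanish, forcing $\beta = 0$, a contradiction.

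The main obstacle is the explicit identification of the TOP-versus-DIFF smoothing obstruction with the chosen Borel class. This requires tracing the $A$-theory assembly maps through the DWW framework, confirming that in degree $4k+1$ over the base $S^{4k+1}$ the rational obstruction group reduces to $K_{4k+1}(\mathbb{Z}) \otimes \mathbb{Q}$, and verifying that the surjective splitting of assembly from earlier in the paper indeed realises a prescribed Borel class as a geometric $M$-bundle. Once this identification is in place, the claim that the examples are detected in the rationalized algebraic $K$-theory of the integers is immediate from the construction.
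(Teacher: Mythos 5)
Your outline correctly identifies the guiding principle --- use the Dwyer--Weiss--Williams criterion to reduce compact fiber smoothing to a lifting problem in parametrized $A$-theory, and obstruct the lift rationally via Borel classes --- but the actual construction, which is the content of the theorem, is asserted rather than carried out, and the specific choice of base is likely to cause trouble.

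The sentence ``one can produce a compact TOP $M$-bundle $p \colon E \to S^{4k+1}$ whose DWW smoothing obstruction pairs non-trivially with a Borel class'' is where the real work lives, and nothing in your proposal explains how to do this. The surjective splitting of the assembly map (Theorem \ref{split}) shows that TOP reductions exist in a stable range; it does not produce bundles with a prescribed nonzero DWW obstruction. Moreover, taking $B = S^{4k+1}$ is problematic: to realize a Borel class by a spherical homotopy class one would need a nontrivial image in $\pi_{4k+1}\bigl(B\mathcal{H}_k^n\bigr)\otimes\mathbb{Q}$ mapping to $K_{4k+1}(\mathbb{Z})\otimes\mathbb{Q}$, but $B\mathcal{H}_k^n$ is far from simply connected ($\pi_1 \cong \pi_0\mathcal{H}_k^n$ contains $GL_k(\mathbb{Z})$ stably), so the plus-construction map $\iota\colon \mathbb{Z}\times\lim B\mathcal{H}_k^n \to A(*)$ is a homology isomorphism but not a rational homotopy equivalence, and there is no immediate reason a Borel class lifts to $\pi_{4k+1}$ of the unplussed space.

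The paper avoids both issues by working with cohomology rather than homotopy: since $\iota$ is a rational cohomology isomorphism, a non-torsion class $x\in H^{4i+1}(A(*);\mathbb{Z})$ pulls back to a non-torsion class $\iota^*x \in H^{4i+1}(B\mathcal{H}_k^n;\mathbb{Z})$ for $k,n$ large, and one simply chooses a connected finite subcomplex $B\subset B\mathcal{H}_k^n$ supporting $\iota^*x$. The inclusion is then literally a classifying map for a fibration with fiber $\vee_k S^n$, so no separate realization step is needed. Theorem \ref{second} supplies the compact TOP reduction once $n$ is large. The remaining and genuinely nontrivial step --- which your proposal also leaves open under the heading of ``the main obstacle'' --- is to show that the $A$-valued trace $\chi_A(p)\colon B\to A(*)$, not merely the classifying map, is rationally nontrivial in degree $4i+1$. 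The paper does this via Waldhausen's additivity theorem: the trace factors as $B\to |wR^{\mathrm{hf}}(*)| \to A(*)$ by $x\mapsto (F_x)_+$, which differs from the classifying map $x\mapsto F_x$ by wedging a constant $S^0$, and hence agrees with it on rational cohomology in positive degrees. Without an argument of this kind relating the trace to the classifying map (or, in your version, pinning down the DWW obstruction to the constructed bundle), the proof is incomplete.
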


\begin{bigthm} \label{third_add}
Let 
$$
S^3 \to E \overset p\to S^3
$$
be the  spherical fibration corresponding to
the generator  of 
$$
\pi_3(BF_3) \cong \pi_5(S^3) =  {\Bbb Z}_2\, ,
$$ 
where $F_3$ is the topological monoid of based self homotopy 
equivalences of $S^3$.

Then $p$ admits a compact {\rm TOP} reduction but does 
 not admit a compact fiber smoothing.\end{bigthm}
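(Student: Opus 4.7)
The plan is to handle the two assertions separately: the existence of a compact TOP reduction by direct application of Theorem \ref{second}, and the non-existence of a compact fiber smoothing by an obstruction argument combining Hatcher's theorem with the Dwyer-Weiss-Williams smooth torsion.

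For the TOP reduction, I apply Theorem \ref{second} with $r = 2$. Because $F_3$ is the monoid of \emph{based} self-equivalences of $S^3$, the fibration $p$ comes equipped with a canonical section. The fiber $S^3$ is $2$-connected, hence $p$ is $3$-connected, which is $(r+1)$-connected. Finally, the base $S^3$ has dimension $3 \le 4 = 2r$. Theorem \ref{second} therefore produces a preferred compact TOP reduction.

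For the non-smoothing direction, I argue by contradiction, supposing there is a smooth fiber bundle $q\colon X \to S^3$ with compact fiber $M \simeq S^3$ realizing $p$ up to fiber homotopy equivalence. When $M$ is closed, the Poincar\'e conjecture forces $M$ to be diffeomorphic to $S^3$; Hatcher's theorem $\text{DIFF}(S^3)\simeq O(4)$ then gives $\text{DIFF}(S^3,*)\simeq O(3)$, so
\[
\pi_3\bigl(B\text{DIFF}(S^3,*)\bigr)\;=\;\pi_2(O(3))\;=\;0,
\]
and the nonzero class in $\pi_3(BF_3) \cong \Bbb Z/2$ cannot be lifted. To exclude fibers with nonempty boundary or of higher dimension, I stabilize to $\text{DIFF}^{\text{st}}(M,*)$ and invoke the Dwyer-Weiss-Williams smooth torsion $\tau^s(p)$: a compact fiber smoothing forces $\tau^s(p) = 0$. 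The plan is to identify $\tau^s(p)$, via the smooth Whitehead spectrum, with the $2$-primary class $\eta^2 \in \pi_2^s$ obtained by stably suspending the classifying map $S^3 \to BF_3$. Since $\eta^2 \ne 0$, this contradicts the existence of a smoothing.

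The main obstacle is the final identification of $\tau^s(p)$ as a specific nonzero $2$-primary class. Because the topological torsion $\tau^t(p)$ already vanishes by the TOP reduction established above, the smoothing obstruction lives entirely in the homotopy fiber of the canonical map $\text{Wh}^{\text{diff}} \to \text{Wh}^{\text{top}}$; controlling this fiber uniformly across all admissible compact smooth $M \simeq S^3$ requires the full DWW apparatus together with the structure of the smooth Whitehead spectrum of a point in low degrees.
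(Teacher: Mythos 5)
Your treatment of the TOP reduction is exactly the paper's: apply Theorem~\ref{second} with $r=2$, using the canonical section coming from the basepoint and the $3$-connectivity of $p$. That half is complete and correct.

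The non-smoothing half, however, has a genuine gap, and you point at it yourself. The Hatcher/Poincar\'e detour is a dead end for the reasons you note (it only handles closed $3$-dimensional fibers and says nothing about compact fibers with boundary, higher-dimensional stabilizations, or non-standard smooth structures), so the entire burden falls on the DWW obstruction argument, which you do not carry out. Concretely, the Dwyer--Weiss--Williams criterion reduces the question to whether the image of the generator of $\pi_3(BF_3)\cong \Bbb Z_2$ under the composite
\[
\pi_3(BF_3)\;\longrightarrow\;\pi_3(BF)\;\longrightarrow\;\pi_3(A(*))
\]
lands in the subgroup $\pi_3(QS^0)\cong\Bbb Z_{24}$. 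The entire content of the proof is the nontrivial computational input that it does \emph{not}: this is the theorem of B\"okstedt and Waldhausen that the composite $BF\to A(*)\to \text{\it Wh}^{\text{diff}}(*)$ is nonzero on $\pi_3$, combined with the Freudenthal observation that $\pi_3(BF_3)\to\pi_3(BF)$ is an isomorphism. Your proposal never invokes this result, and without it there is no way to show the smooth torsion obstruction is nonzero. You also slip on degrees: you write ``the $2$-primary class $\eta^2\in\pi_2^s$,'' but the relevant obstruction is an element of $\pi_3(A(*))$ (equivalently of $\pi_3(\text{\it Wh}^{\text{diff}}(*))$ after projecting off $\pi_3^s$), namely the image of the generator of $\pi_3(BF)\cong\pi_2^s$ under a map whose nontriviality is exactly what has to be established. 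So the proposal correctly locates the obstruction but does not prove it is nonzero, which is the whole point; it is a plan with the decisive step missing.
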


The following result shows that the obstructions to 
compact fiber smoothing are killed when taking the
cartesian product with finite complex having trivial
Euler characteristic.

\begin{bigthm} \label{kill}
Let $p: E\to B$ be a fibration with homotopy finite fibers.
Let $X$ be a finite complex with zero Euler characteristic. 
Then the fibration
$$
q\: E \times X \to B
$$
given by $q(e,x) = p(e)$ admits a compact fiber smoothing.
\end{bigthm}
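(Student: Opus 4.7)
My plan is to reduce to the case in which $X$ is a compact smooth manifold and then show the Dwyer-Weiss-Williams smoothing obstruction is forced to vanish by the fact that $\chi(X)=0$.

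First I would embed $X$ as a subcomplex of some $\mathbb R^k$ and replace it by a regular neighborhood $N$. Then $N$ is a compact smooth manifold homotopy equivalent to $X$, with $\chi(N)=\chi(X)=0$, and the fibration $q$ is fiber homotopy equivalent to the projection $\pi\colon E\times N\to B$, $(e,n)\mapsto p(e)$. It therefore suffices to equip $\pi$ with a compact fiber smoothing. Note that $\pi$ factors as the composite of the trivial smooth $N$-bundle $E\times N\to E$ with $p\colon E\to B$, so the only ``non-smooth'' ingredient in $\pi$ comes from $p$.

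Next I would appeal to the converse Riemann-Roch theorem of Dwyer, Weiss and Williams, which says that $\pi$ admits a compact fiber smoothing precisely when its refined transfer $t(\pi)$ admits a lift through the smooth $A$-theoretic characteristic class map. Since $\pi$ is, as a fibration over $B\cong B\times\ast$, the external product of $p\colon E\to B$ with $\pi_N\colon N\to\ast$, the product axiom \textbf{A3} gives
\[
t(\pi)\,=\,t(p)\smsh t(\pi_N).
\]
Because $N$ is a compact smooth manifold, the smooth Dwyer-Weiss-Williams Riemann-Roch theorem supplies a canonical smooth $A$-theory lift of $t(\pi_N)$, whose augmentation under $N^+\to S^0$ is the Euler characteristic $\chi(N)=0$.

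The final step is to propagate this vanishing: the smoothing obstruction for $\pi$ inherits a multiplicative structure from the Whitney pairing on $A$-theoretic characteristic classes, and the factor contributed by $N$ is controlled by $\chi(N)$. Since $\chi(N)=0$, the obstruction vanishes, and the converse Riemann-Roch theorem yields the desired compact fiber smoothing. The main obstacle is to make this multiplicativity rigorous at the level of the smoothing obstruction (rather than just at the level of the refined transfer), so that the vanishing $\chi(N)=0$ actually kills the obstruction coming from $t(p)$. For this I anticipate using the surjective splitting of the assembly map for $A(X)$ that is featured in the abstract: the splitting makes the smooth lift of $t(\pi_N)$ sufficiently canonical that the factorization of the smoothing obstruction through $\chi(N)$ becomes meaningful, and then the vanishing of $\chi(N)$ is immediate.
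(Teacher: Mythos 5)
There is a genuine gap, and you have in fact located it yourself: you write that ``the main obstacle is to make this multiplicativity rigorous at the level of the smoothing obstruction,'' and that obstacle is precisely where the argument is missing. The difficulty is that the Dwyer-Weiss-Williams criterion for a compact \emph{fiber smoothing} is that the generalized Euler characteristic section $\chi_q \in \sec(A_B(E\times X)\to B)$ factor, up to vertical homotopy, through the unit map $Q_B(E\times X)\to A_B(E\times X)$. It is not enough that the ``$N$-factor'' of the obstruction vanish numerically; one needs an actual, coherent vertical nullhomotopy of the section, and a multiplicative slogan (``the obstruction factors through $\chi(N)$'') does not manufacture one. Moreover, the tool you reach for to repair this --- the assembly map splitting of Theorem~\ref{split} --- is not the right one: that splitting controls lifts along $A^\%_B\to A_B$ and is what proves the compact \emph{TOP reduction} theorem~\ref{second}; it says nothing about lifting along $Q_B\to A_B$, which is what the smooth converse Riemann-Roch theorem requires, and it plays no role in the paper's proof of Theorem~\ref{kill}.

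The mechanism the paper actually uses is of a different flavor: work directly with the CW structure on $X$ (no need to pass to a regular neighborhood $N$). At each fiber $E_x$ one has cofibration sequences of retractive spaces over $E_x$ coming from the skeletal filtration $X^{k}\subset X^{k+1}$, and Waldhausen's additivity theorem gives a \emph{preferred} path in $A(E_x)$ from $(E_x\times X)\amalg E_x$ to the alternating sum $\sum_k E_x\times X^{(k)}$. Using $X^{(k)}\cong T_k\smsh S^k$ and the fact that fiberwise suspension realizes the additive inverse, this sum regroups into $(E_x\times T)\vee \Sigma_{E_x}(E_x\times T)$ once $\chi(X)=0$ forces the even-cell and odd-cell based sets to have the same cardinality, and a second application of additivity contracts this to the zero object. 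The point is that all these paths are \emph{preferred} and vary continuously in $x$, so they assemble into a vertical nullhomotopy of $\chi_q$, which therefore trivially factors through $Q_B$ via the basepoint section. Your reduction to a smooth $N$ and the observation that $\chi(N)=0$ point at the right numerology, but the step that converts ``$\chi=0$'' into an actual coherent nullhomotopy is supplied by the additivity theorem applied along the skeletal filtration, not by a splitting of assembly or a Whitney-type product formula.
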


\begin{rem} At the time the first draft of this paper was written,
it was forgotten by both authors that this result was already stated in 
\cite[Cor.\ 5.2.5]{WW-Auto} with a sketched proof. 
This paper contains a different proof.
\end{rem}

\subsection*{The trace map}
Given a fibration $p\: E\to B$,
let $p^+: E^+ \to B^+$ be the associated map of retractive
spaces over $B$.
 
Given a refined transfer $t(p)\: B^+ \to E^+$,
we take its composition with $p^+$ to obtain a fiberwise stable
homotopy class 
$$
p^+\circ t(p)\: B^+ \to B^+\, .
$$ 
A straightforward unraveling of definitions
shows that $p^+\circ t(p)$ is equivalent to  
specifying an ordinary stable cohomotopy class
$$
\text{tr}_t(p) \: B_+ \to S^0
$$
(because $B^+$ coincides with $B\times S^0$).
The latter is  called the {\it trace} of the fibration $p$. 
(compare Brumfiel and Madsen 
\cite[p.\ 137]{Brumfiel-Madsen}).

The following is a uniqueness result about the trace.

\begin{bigthm}\label{trace} Let
$t$ and $ t'$ be refined transfers. 
Then $\text{\rm tr}_t = \text{\rm tr}_{t'}$ on the class
of fibrations whose base and fiber have the homotopy type of a finite complex.
\end{bigthm}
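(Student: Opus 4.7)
The plan is to reduce to the case of fibrations admitting a compact TOP reduction, where Theorem~\ref{first} gives equality of refined transfers (and hence of their traces), by using the iterated fiberwise suspension to bridge from the given fibration $p$ to such a TOP-reducible one.

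First I will establish a recursion under fiberwise suspension. The total space $S_B E$ is a double mapping cylinder, so it sits in a pushout of fibrations over $B$ with corners $E\sqcup E$, $B\sqcup B$, $E\times I$ and $S_B E$. Applying axiom A4 and composing with $(S_B p)^+$ will yield
\[
\text{tr}(S_B p)\;=\;\text{tr}(B\sqcup B\to B) + \text{tr}(E\times I\to B) - \text{tr}(E\sqcup E\to B).
\]
I will identify the middle term with $\text{tr}(p)$ by A1-derived homotopy invariance of the trace (the projection $E\times I\to E$ is a fiber homotopy equivalence over $B$). I will identify the outer terms using A3 applied to the external product decompositions $B\sqcup B\to B = \id_B\times p_{S^0}$ and $E\sqcup E\to B = p\times p_{S^0}$, with $p_{S^0}\colon S^0\to *$, which realize them as $c\cdot 1$ and $c\cdot\text{tr}(p)$ respectively, where $c := \text{tr}(p_{S^0}) \in \pi^0_s(S^0) = \mathbb{Z}$ acts by integer multiplication on $\pi^0_s(B_+)$. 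The recursion then reads $\text{tr}(S_B p) = c + (1-c)\,\text{tr}(p)$.

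Next I will pin down $c = 2$. Since $p_{S^0}$ is a compact TOP bundle, Theorem~\ref{first} forces the axiomatic value of $\text{tr}(p_{S^0})$ to agree with the value computed for the Becker--Gottlieb refined transfer from Section~\ref{construct}, which a direct calculation (identical to the classical one for a two-sheeted cover) gives as $2$. The recursion therefore becomes $\text{tr}(S_B p) = 2 - \text{tr}(p)$, and iterating gives $\text{tr}(S_B^j p) = \text{tr}(p)$ for $j$ even and $2-\text{tr}(p)$ for $j$ odd. Consequently
\[
\text{tr}_t(p) - \text{tr}_{t'}(p)\;=\;\pm\bigl(\text{tr}_t(S_B^j p) - \text{tr}_{t'}(S_B^j p)\bigr).
\]
By the corollary to Theorem~\ref{second}, for $j$ sufficiently large the fibration $S_B^j p$ admits a compact TOP reduction. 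Theorem~\ref{first} applied to $S_B^j p$ forces the bracketed difference to vanish, and hence $\text{tr}_t(p) = \text{tr}_{t'}(p)$.

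The main technical obstacle will be the first step: verifying that the pushout diagram is a homotopy pushout of fibrations over $B$ to which A4 legitimately applies, and correctly tracking the effect of the pushforwards $(j_i)_*$ under composition with $(S_B p)^+$ so that the A3 product formula collapses to integer multiplication on $\pi^0_s(B_+)$. Once these identifications are in place, the appeal to Theorems~\ref{first} and~\ref{second} via the iteration is essentially formal.
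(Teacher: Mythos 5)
Your proof is correct, and the overall strategy (establish the recursion $\text{tr}(S_B p) = 2 - \text{tr}(p)$, iterate twice to conclude $\text{tr}(S_B^{2j}p)=\text{tr}(p)$, pass to large $j$ where the corollary to Theorem~\ref{second} gives a compact TOP reduction, and then apply Theorem~\ref{first}) is the same as the paper's. The difference is in how the recursion is derived. You decompose the double mapping cylinder via the homotopy pushout
$$
\xymatrix{
E\sqcup E \ar[r]\ar[d] & B\sqcup B \ar[d]\\
E\times I \ar[r] & S_B E\, ,
}
$$
which makes the two ``end'' terms products with $S^0$, so you must invoke the product axiom A3 to reduce them to $c$ and $c\cdot\text{tr}(p)$ with $c=\text{tr}(p_{S^0})$, and then use Theorem~\ref{first} applied to the compact bundle $p_{S^0}$ to pin down $c=2$. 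The paper instead decomposes $S_B E$ as the union of two fiberwise cones along $E$,
$$
\xymatrix{
E \ar[r]\ar[d] & C_B E \ar[d]\\
C_B E \ar[r] & S_B E\, ,
}
$$
and since $B\hookrightarrow C_B E$ is a fiber homotopy equivalence, the normalization axiom immediately gives $\text{tr}(C_B p)=1$, so additivity yields $\text{tr}(S_B p)=1+1-\text{tr}(p)$ directly. The paper's cone--cone decomposition is therefore a bit more economical, needing only A1, A2, A4 for this step, whereas your route additionally uses A3 and a preliminary appeal to Theorem~\ref{first}. Both are valid; yours has the small virtue of making explicit where the Euler characteristic $\chi(S^0)=2$ enters. (As a side remark, $c=2$ can also be extracted from the axioms alone: additivity applied to the pushout $\emptyset\to *\rightrightarrows S^0$ over a point, together with the observation that $t(\emptyset\to *)$ lives in the zero group $[S^0,*]$, gives $t(p_{S^0})=1+1-0$, so the appeal to Theorem~\ref{first} here is avoidable.)
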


\begin{rem} For further results, see
Douglas \cite{Douglas} and Dorabia\l a and Johnson.
\cite{Dora-Johnson} 
\end{rem} 

\subsection*{Assembly}
The proof of Theorem \ref{second} uses the assembly map
for Waldhausen's algebraic of $K$-theory of spaces functor $A(X)$.
If $f$ is a homotopy functor from spaces to spectra, the assembly map
is a natural transformation 
$$
f^\%(X) \to f(X)
$$
which best approximates   
$f$ by an excisive functor $f^\%$ in the homotopy category of functors 
(recall that a functor is excisive if it preserves homotopy pushouts).

The crucial result used in the proof
of Theorem \ref{second} is a functorial stable range
splitting for the assembly map for $A(X)$ considered as
a functor on the category of based spaces.

\begin{bigthm} \label{split} For based spaces $X$, 
the assembly map
$$
A^\%(X) \to A(X)
$$
is stably split. 

More precisely, there is a homotopy
functor $X \mapsto B(X)$ from based spaces to spectra,
and a natural transformation
$B(X) \to A^\%(X)$ such that the composite map
$$
B(X) \to  A^\%(X) \to A(X)
$$
is $2r$-connected whenever $X$ is $r$-connected.
\end{bigthm}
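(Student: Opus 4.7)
The plan is to realize $B(X)$ as the reduced suspension spectrum $\Sigma^\infty X$ and to identify the composite $B(X) \to A^\%(X) \to A(X)$ with Waldhausen's canonical unit map. For a based space $X$ the first Goodwillie derivative of $A$ at the base point is the sphere spectrum $\mathbb{S}$, so the excisive approximation is naturally equivalent to $A^\%(X) \simeq \Sigma^\infty X$. Equivalently, smashing the ring-spectrum unit $\mathbb{S} \to A(*)$ with $X$ produces a natural transformation $\Sigma^\infty X \to A^\%(X)$; the universal property of excisive approximation then guarantees that its composition with assembly is precisely the standard Waldhausen map $\Sigma^\infty X \to A(X)$ coming from the ring structure on $A$.

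Setting $B(X) := \Sigma^\infty X$, what remains is to estimate the connectivity of this Waldhausen unit. By Waldhausen's stable splitting theorem the unit is a stable split injection with cofiber the smooth Whitehead spectrum $\text{\it Wh}^{\rm Diff}(X)$, so the required $2r$-connectivity reduces to the statement that $\text{\it Wh}^{\rm Diff}(X)$ is $2r$-connected whenever $X$ is $r$-connected. I would supply this estimate using Goodwillie's analyticity results for $A$: the $n$-th layer $D_n A(X)$ of the Taylor tower has connectivity that grows in both $n$ and the connectivity of $X$, and already the second layer forces the fiber of $A^\%(X) \to A(X)$ to live in degrees exceeding $2r$.

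The main obstacle is this last connectivity estimate, which draws on the full force of Goodwillie's calculus of functors for $A$-theory (or, equivalently, on concordance-stability results of Igusa and Waldhausen combined with the stable parameterized $h$-cobordism theorem). A secondary subtlety is the formal verification that the map $B(X) \to A^\%(X)$ built from the ring-unit genuinely agrees with the linearization arising in the assembly; this is automatic from the universal property of excisive approximation, but needs to be handled carefully to preserve naturality in $X$ and compatibility with the assembly map used elsewhere in the paper.
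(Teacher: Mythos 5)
There is a genuine gap, and it stems from a basic mis-identification. You assert that for based $X$ the excisive approximation is $A^\%(X) \simeq \Sigma^\infty X$, but this confuses the assembly functor with the Goodwillie linearization $P_1 A$. The assembly functor in this paper is $A^\%(X) \simeq X_+ \smsh A(*)$ (the best \emph{homology-theory} approximation as a functor of unbased spaces), which is not reduced: $A^\%(*) \simeq A(*) \not\simeq *$. The paper even has a remark right after the proof of Theorem~\ref{split} explaining why the distinction between $A^\%$ and $P_1 A$ is exactly the point at issue — it is why the splitting fails on the category of unbased spaces.

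Because of this, the proposed $B(X) = \Sigma^\infty X$ cannot work. The target $A(X)$ retracts onto $A(*)$ via the basepoint, and $A(*)$ has $\pi_0 \cong \mathbb{Z}$ (and nontrivial higher homotopy as well), so any map $\Sigma^\infty X \to A(X)$ with $X$ connected already fails to be $0$-connected, let alone $2r$-connected. Relatedly, your final reduction — that one needs $\text{\it Wh}^{\rm Diff}(X)$ to be $2r$-connected when $X$ is $r$-connected — is false: $\text{\it Wh}^{\rm Diff}(*)$ is not highly connected. What is true, and what the paper uses (in its first proof, via the decomposition $A(X) \simeq \Sigma^\infty(X_+) \times \text{\it Wh}^{\rm diff}(X)$), is that the \emph{map} $\text{\it Wh}^{\rm diff}(*) \to \text{\it Wh}^{\rm diff}(X)$ is approximately $2r$-connected. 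Correspondingly, the paper's $B(X)$ carries an extra wedge summand of $A(*)$ to hit the image of the basepoint: in the second proof $B(X) = (X \smsh S^0) \vee A(*)$, mapped to $A^\%(X) \simeq (X\smsh A(*)) \vee A(*)$ via the ring unit $S^0\to A(*)$ on the first factor and the identity on the second. To fix your argument you would need to replace $\Sigma^\infty X$ with something that includes this $A(*)$ (or $\text{\it Wh}^{\rm diff}(*)$) wedge summand and redo the connectivity estimate for the map of cofibers rather than for the cofiber itself.
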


Given what is already known about $A(X)$, 
this result is not hard to prove. However, 
it is worth stating here
since it is one of our main tools. 
The role of the basepoint here is crucial; the result is false
on the category of unbased spaces.

\begin{acks} Both authors wish to thank Bill Dwyer for
the method which produces the examples 
appearing in Theorem \ref{third}. We also wish to thank 
Ralph Cohen for miscellaneous discussions.

An earlier draft of this paper asserted that the splitting
in Theorem \ref{split} was valid on the category of unbased spaces. 
We are indebted to a referee for explaining to us why this is
not true.

The first author was partially supported by
NSF grants DMS0503658 and DMS0803363.
\end{acks}

\section{Conventions}\label{prelim}

\subsection*{Spaces}
We work in the category of compactly generated spaces.
A map of spaces $f\: X \to Y$ is a {\it weak homotopy equivalence}  
when it induces an isomorphism on homotopy in each degree.
A  space $X$ is {\it $r$-connected}, if
for every integer $k$ such that $-1 \le k \le r$,
every map $S^k \to X$ extends to map $D^{k+1}\to X$. 
In particular, every non-empty
space is $(-1)$-connected. The empty space is considered to be
$(-2)$-connected. A map of spaces is $r$-connected if
its homotopy fibers (with respect to all basepoints) are $(r-1)$-connected.
A space is {\it homotopy finite} if it has the homotopy type
of a finite cell complex.

Although Quillen model categories are barely mentioned in this paper,
we will be implicitly working in the model structure for spaces
whose weak equivalences are the weak homotopy equivalences, 
whose fibrations are Serre fibrations, and whose cofibrations
are the Serre cofibrations.

There is one notable exception to this policy: it is 
not known whether the fiberwise suspension
of a Serre fibration is again a fibration, but the analogous statement
is true in the Hurewicz fibration case.
So unless otherwise mentioned, we usually work with Hurewicz
fibrations.

A commutative square of spaces (or spectra)
$$
\xymatrix{
A \ar[r] \ar[d] & C\ar[d]\\
B \ar[r] & D
}
$$
is said to be {\it $r$-cartesian} if the map  
from $A$ to the homotopy pullback $P := \text{ holim } (B\to D\leftarrow C)$
is $r$-connected. More generally, suppose the square only commutes
up to homotopy. Given a choice of homotopy  $A\times [0,1] \to D$,
one gets a preferred map  $A \to P$.
In this instance, we say that the square together with its commuting homotopy
is $r$-cartesian provided that  $A\to P$ is $r$-connected.

\subsection*{Fiberwise spaces}
We will be assuming familiarity with fiberwise homotopy theory in
its unstable and stable contexts. 
The book of Crabb and James \cite{Crabb-James} gives the foundational material
on this subject.

For a cofibrant space $B$, let $T(B)$ denote the
category of {\it spaces over $B$}. An object of $T(B)$ consists of a space
$X$ and a reference map $X\to B$ (where 
the latter is typically suppressed from the notation).
A morphism $X\to Y$ is a map of spaces which is compatible with reference
maps. A morphism is a fibration or weak equivalence if and only if
it is one when considered as a map of topological spaces. This comes from
a model structure on $T(B)$, where the cofibrations are defined using the 
lifting property \cite{Quillen}.

The `pointed' version of $T(B)$ is the category $R(B)$ of 
{\it retractive spaces over $B$}. This category has objects consisting
of a space $Y$ together with maps $B\to Y$ and $Y \to B$ such that the 
composite $B\to Y \to B$ is the identity map. A morphism is a map
of underlying spaces which is compatible with the structure maps.
Again $R(B)$ is a model category by 
appealing to the forgetful functor
to spaces. An object of $R(B)$ is said to be {\it finite} if it
is obtained from the zero object by attaching a finite number of cells.

The {\it (reduced) fiberwise suspension} functor $\Sigma_B\: R(B) \to R(B)$
is given by mapping an object $Y$ to the object 
$$
\Sigma_B Y = (Y \times [0,1]) \cup_{X\times [0,1]} X \, ,
$$
where $X\times [0,1] \to Y \times [0,1]$ arises from the structure map
$X\to Y$ by taking cartesian product with the identity map and
$X\times [0,1]\to X$ is first factor projection.

$R(B)$ also has smash products. If $Y$ and $Z$ are objects, then
their fiberwise smash product is given by
$$
Y\smsh_B Z \,\, := \,\, (Y \times_B Z) \cup_{(Y\cup_X Z)} X
$$
where $Y \times_B Z$ is the fiber product
and $Y \cup_B Z \to Y \times_B Z$ is the evident map.
Note that the special case of $Z = S^1 \times B$ gives
$\Sigma_B Y$.

S.\ Schwede 
\cite{Schwede} has shown that the category of 
fibered spectra over $B$, i.e., 
spectra formed using objects of $R(B)$,
again forms a model category, where the weak
equivalences in this case are  the `stable weak homotopy equivalences.'

The recent book of  May and  Sigurdsson
equips the category of
fibered spectra over $B$ with a well-behaved internal smash product
\cite[\S11.2]{May-Sigurdsson}.

\section{Construction of a refined transfer}
\label{construct}

Becker and Gottlieb define a
refined transfer in their paper \cite[\S5]{Becker-Gottlieb}. 
The purpose of this section is to sketch the idea behind
their construction.

First consider the case
when $B$ is a point. Let $F$ be a homotopy finite space,
which for convenience we take to be  cofibrant.
We let $F_+$ denote the effect of adding a disjoint basepoint to $F$.
The {\it $S$-dual} of $F_+$ is the spectrum $D(F_+)$ which 
is the mapping spectrum $\text{map}(F_+,S^0)$, where $S^0$ is
the sphere spectrum.
Explicitly, it is the spectrum whose $k$-th space is the space of
maps $F_+ \to QS^k$, where
$Q = \Omega^\infty\Sigma^\infty$ is the stable homotopy functor. 
More generally, we use the notation
$D(E)$ for the function spectrum of maps $E\to S^0$ whenever
$E$ is a homotopy finite spectrum.

There is a map of spectra
$$
d\: F_+ \smsh D(F_+) \to S^0
$$
which defined as the adjoint to the identity map of $D(F_+)$.
The canonical stable map 
$$
F_+ \to D(D(F_+))
$$
is a weak equivalence. Furthermore, we have a
preferred weak equivalence
$$
F_+ \smsh D(F_+) \,\, \simeq \,\, D(F_+ \smsh D(F_+)) 
$$
which shows that $F_+\smsh D(F_+)$ is {\it self dual}.
Hence the dualization of the map $d$ above yields
a map
$$
d^*\: S^0 = D(S^0) \to D(F_+ \smsh D(F_+)) \simeq F_+ \smsh D(F_+) \, .
$$
The map $d^*$ is well-defined in the homotopy category of spectra.

Now form the homotopy class
$$
\begin{CD}
t(F)\: S^0 @> d^* >> F_+ \smsh D(F_+) @> \Delta_{F_+} \smsh \text{id} >>
F_+ \smsh F_+\smsh D(F_+) @> \text{id} \smsh d >> 
F_+\smsh S^0\, .
\end{CD}
$$
Then $t(F)$ is identified with a stable homotopy class $S^0 \to Q(F_+)$.
This gives a refined transfer in the case when $B$ is a point. 

Proceeding to the case of a general fibration $E\to B$, 
one appeals to a fiberwise version of the above
to get a refined transfer. As in the introduction,
let $E^+$ denote $E\amalg B$, and define $D_B(E^+)$
to be the fiberwise mapping spectrum of maps $E^+ \to B\times S^0$.
Then analogous to the above, one has a fiberwise stable map
$$
d\: E^+ \smsh_B D_B(E^+) \to B^+
$$
which is adjoint to the identity. 
One then continues in the same
way as above, and the outcome is a 
fiberwise stable homotopy class
$$
B^+ \to E^+ \, .
$$
This gives our rough description of the refined transfer in the general case.

\subsection*{Verification of the axioms} 
The only axiom which is not straightforward 
to verify is the additivity axiom A4. Becker and Schultz
remark that this axiom follows from  formal considerations
involving S-duality. In our context, the crucial points 
are that the map of fibered spectra 
$$
E^+\to D_B D_B(E^+)
$$
is a natural transformation and the
double dual $D_B D_B$ preserves homotopy pushouts.

\section{Characterization when $B$ is a point}
\label{B=pt}

We show how the axioms characterize the refined transfer for
the constant fibration $F \to *$, where $F$ is a homotopy finite cell complex
This case actually follows from the work of Becker and Schultz.
However, it will be useful for what comes later to recast their proof in a 
more coordinate-free language. The case $B = *$ captures
the main features of the proof in the general case.
\medskip

We first digress with an observation
about the axioms in the case of a trivial fibration.
For a trivial fibration 
$$
F \to F \times B \overset{p_B}\to B\, ,
$$
a refined transfer can be regarded as a fiberwise stable 
homotopy class
$$
t(p_B)\: B^+ \to (F_+) \times B \, ,
$$
and the associated 
ordinary transfer can be regarded 
as the associated  stable homotopy class 
$$
t_F(B)\: B_+ \to (F \times  B)_+
$$
which is obtained from $t(p_B)$ by collapsing the preferred
copy of $B$ to a point (note: 
$(F \times  B)_+ = (F_+) \times B)/B$).

More generally, let $(B,A)$ be a cofibration pair.
Choose an actual stable map 
$\hat t(p_B)\: B^+ \to (F_+) \times B$
representing the refined transfer $t(p_B)$.
The naturality axiom implies that the fiberwise homotopy class
of the composite stable map
$$
\begin{CD}
 A^+ @>>> B^+ @> \hat t(p_B) >>  (F_+) \times B 
\end{CD} 
$$
coincides with inclusion  $(F_+) \times A \to (F_+) \times B$
composed with the refined transfer $t(p_A)$ for the trivial fibration
$p_A\: F\times A \to A$. Furthermore, since the diagram
$$
\xymatrix{
F\times A \ar[r] \ar[d] & F\times B\ar[d] \\
A \ar[r] & B
}
$$
is a pullback, the space of choices consisting of 
a choice of representative $\hat t(p_A)$ of $t(p_A)$ together with a 
choice of homotopy
making the diagram of axiom A1  commute is a contractible space.
This shows that once the representative $\hat t(p_B)$ is
chosen, then for any cofibration $A\subset B$,
one obtains a preferred contractible choice of representatives for 
all $t(p_A)$ equipped with a commuting homotopy.

In particular, the representative $\hat t(p_B)$ 
 determines 
a preferred stable homotopy class of pairs
$$
(B_+,A_+) \to ((F \times  B)_+,(F \times  A)_+)\, .
$$
whose components are the transfers $t_F(B)$ and $t_F(A)$.
The latter in turn induces a stable homotopy class on
quotients
$$
t_F(B,A) \: B/A \to (F_+)\smsh (B/A)\, .
$$
Note the special case when $A$ is the empty space gives
$t_F(B,\emptyset) = t_F(B)$.

Axioms A2 and A3 straightforwardly imply
$$
t_F(B,A) \,\, = \,\, t_F\smsh \text{id}_{B/A}\, ,
$$
where $t_F\: S^0 \to F_+$ coincides with 
$t_F(*,\emptyset)$. Also note that $t_F(B,A) = t_F(B/A,*)$.
\medskip

With this observation, we can now  return to the problem
of characterizing the refined transfer when the base space is a point.
In this instance,
a refined transfer is represented by a homotopy class
of stable map $t_F \:S^0 \to F_+$, where $t_F = t_F(*,\emptyset)$.

Because $F$ is a homotopy finite space, there is a 
codimension zero compact {\it smooth} manifold 
$$
M \subset \Bbb R^d
$$
and a homotopy equivalence $F \simeq M$.
By homotopy invariance (i.e., axiom A1 when $f$ is the identity map
of a point), 
it will suffice to characterize
the homotopy class
$$
t_M := t_M(*,\emptyset)\: S^0 \to M_+\, .
$$

Consider
the commutative pullback diagram of pairs
\begin{equation}\label{BS_diag1}
\xymatrix{
(S^d\times M,M) \ar[r]^{\alpha\times 1\quad\,\,} \ar[d]_{\pi_1} & 
((M/\partial M) \times M, M)
\ar[d]^{\pi_1} \\
(S^d,*) \ar[r]_{\alpha} & (M/\partial M,*)
}
\end{equation}
The vertical maps of these diagrams are fibrations.
Applying the relative transfer construction and using
naturality, we see that
the associated diagram of stable maps
\begin{equation}\label{BSdiag2}
\xymatrix{
S^d\smsh M_+ \ar[r]^{\alpha \smsh 1\quad\,\, } & M/\partial M \smsh M_+  \\
S^d \ar[r]_\alpha \ar[u]^{t_M(S^d,*)} & M/\partial M 
\ar[u]_{t_M(M/\partial M,*)}
}
\end{equation}
homotopy commutes. The product axiom for $S^d$ and $F$ implies
that 
$$
t_{*}(S^d,*) \smsh t_M(*,\emptyset) = t_M(S^d,*)\, .
$$
The normalization axiom implies $t_{*}(S^d,*)\: S^d \to S^d$
is the identity, and  $t_M = t_M(*,\emptyset)$ by definition.
Consequently, we see that the diagram of stable maps
\begin{equation} \label{alpha-diag}
\xymatrix{
S^d\smsh M_+ \ar[r]^{\alpha \smsh 1\quad} & M/\partial M \smsh M_+  \\
S^d \ar[r]_\alpha \ar[u]^{1_{S^d}\smsh t_M} & M/\partial M 
\ar[u]_{t_M (M/\partial M,*) \,\, \simeq \,\, 1_{M/\partial M} \smsh t_{M}}
}
\end{equation}
is homotopy commutative.

Consider the diagonal embedding
$$
\Delta\: (M,\partial M) \to (M \times M, (\partial M) \times M).
$$
The associated map of quotients
$M/\partial M \to  (M/\partial M)\smsh (M_+)$ will also be
denoted $\Delta$. The diagonal embedding
has a compact tubular neighborhood isomorphic to the total space of
the unit tangent disk bundle of $M$, which is a trivial
bundle since $M$ is a codimension zero submanifold of euclidean space.
Let $D$ denote the unit disk bundle and let $C$ denote the complement
of the interior of the tubular neighborhood. Then we have a pushout
square
$$
\xymatrix{
S \ar[r]\ar[d]& C\ar[d] \\
D \ar[r] & M\times M\, .
}
$$
The inclusion $(\partial M) \times M \to M \times M$ admits a
factorization up to homotopy through $C$. The factorization
is given by choosing an internal collar of $\partial M$ and
letting $M_0$ denote the result of removing the open collar. 
Then $M_0 \subset M$ is a homotopy equivalence and
the inclusion $\partial M \times M_0 \to M\times M$ has image in $C$
provided that the tubular neighborhood has been chosen sufficiently small.

The inclusion
$$
(M \times M_0,\partial M \times M_0) \to (M \times M,C) 
$$
then gives rise to a map of quotients
$$
M/\partial M \smsh M_+  \cong  M/\partial M_0 \smsh (M_0)_+
\to (M \times M)/C \cong D/S \cong S^d\smsh M_+ \, .
$$
Denote it by  $c\: M/\partial M \smsh M_+\to S^d\smsh M_+$.

\begin{lem}[Compare {\cite[lem.\ 2.5]{Becker-Schultz}}] \label{fact1} The composite
$$
\begin{CD}
S^d \smsh M_+ @> \alpha \smsh 1 >> (M/\partial M) \smsh M_+ 
@> c >> S^d \smsh M_+
\end{CD}
$$
is homotopic to the identity.
\end{lem}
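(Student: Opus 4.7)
The plan is to interpret both $\alpha\smsh 1$ and $c$ as Pontryagin--Thom collapses, recognize the composite as itself the Pontryagin--Thom collapse for a specific embedding $M\hookrightarrow\mathbb{R}^d\times M$, and deform that embedding to one whose collapse is manifestly the identity.

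Under the natural identifications $S^d\smsh M_+\cong(\mathbb{R}^d\times M)_+$ (writing $S^d=\mathbb{R}^d\cup\{\infty\}$) and $(M/\partial M)\smsh M_+\cong(M\times M)/(\partial M\times M)$, the map $\alpha\smsh 1$ becomes the Pontryagin--Thom collapse for the codimension-zero inclusion $M\times M\hookrightarrow\mathbb{R}^d\times M$: it sends $(w,m)$ to $([w],m)$ when $w\in M\setminus\partial M$ and to the basepoint otherwise. The map $c$, as constructed from the pushout $D\cup_S C=M\times M$ with $D\cong D^d\times M_0$ a tubular neighborhood of the diagonal trivialized via the canonical framing $TM\cong M\times\mathbb{R}^d$ (which exists because $M$ has codimension zero in $\mathbb{R}^d$), is the Pontryagin--Thom collapse for the diagonal inclusion $\Delta\:M\hookrightarrow M\times M$.

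Choosing the tube radius $\varepsilon$ smaller than the collar width ensures that $D$ agrees, as a subset of $\mathbb{R}^d\times M$, with $\{(w,m):m\in M_0,\,|w-m|<\varepsilon\}$, so the same tube simultaneously serves as a tubular neighborhood of the composite embedding
$$
\Delta'\:M\hookrightarrow\mathbb{R}^d\times M,\qquad m\mapsto(m,m),
$$
inside $\mathbb{R}^d\times M$. The standard functoriality of Pontryagin--Thom collapses for a composition of embeddings with nested tubes then identifies $c\circ(\alpha\smsh 1)$ with the Pontryagin--Thom collapse for $\Delta'$, which is a self-map of $S^d\smsh M_+$ because the normal bundle of $\Delta'$ is canonically $M\times\mathbb{R}^d$.

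Finally, the straight-line isotopy $h_s(m)=(sm,m)$, $s\in[0,1]$, deforms $\Delta'=h_1$ to the zero section $s_0=h_0$ within $\mathbb{R}^d\times M$. A common choice of sufficiently small tube radius and normal framing gives a continuously varying family of tubular neighborhoods along the isotopy, hence a one-parameter family of Pontryagin--Thom collapses providing the desired homotopy; the collapse associated to $s_0$ is visibly the identity on $S^d\smsh M_+$. The main technical point is maintaining the nested-tube and isotopy-tube compatibilities, both handled by shrinking $\varepsilon$ and restricting base points to $M_0$.
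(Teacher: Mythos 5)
Your proposal follows essentially the same approach as the paper: both recognize $\alpha\smsh 1$ and $c$ as Pontryagin--Thom collapse maps, compose the underlying embeddings (the codimension-zero inclusion $M\times M\subset D^d\times M$ followed by the diagonal $\Delta\colon M\to M\times M$) to identify $c\circ(\alpha\smsh 1)$ with the collapse map for a tubular neighborhood of $\Delta'(M)=\{(m,m)\}$ inside $D^d\times M$, and then conclude that this collapse is homotopic to the identity. The one divergence is in the final step: the paper argues that the complement of the tube (in $(D^d\times M)/(S^{d-1}\times M)$) is contractible, so collapsing it is a homotopy equivalence identified with the identity, whereas you run the explicit straight-line isotopy $h_s(m)=(sm,m)$ from $\Delta'$ down to the zero section and invoke isotopy invariance of the collapse. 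These are two phrasings of the same geometric fact, and both are correct; your version is somewhat more explicit about the tubular-neighborhood bookkeeping.
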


\begin{proof} We can assume that $M$ is embedded in the unit disk $D^d$
in such a way that $M$ meets $\partial D^d = S^{d-1}$ transversely in 
$\partial M$. Then we have an embedding
$(M,\partial M) \subset
(D^d,S^{d-1})$. Let $(W,\partial_0 W)$ be the closure of the complement
of $(M,\partial M)$ in $(D^d,S^{d-1})$. 

Consider the associated embedding
$$
(M \times M,\partial M \times M) \subset (D^d\times M,S^{d-1}\times M) 
$$
given by taking the cartesian product with $M$.
Then the effect of collapsing $W \times M$ in $D^d \times M$ gives
rise to the map $\alpha \smsh 1$.

Consider the  composite
embedding
$$
\begin{CD}
(M,\partial M)@>\Delta >> 
(M \times M,\partial M \times M) \subset (D^d\times M,S^{d-1}\times M)\, .
\end{CD}
$$
The composite $c\circ (\alpha \smsh 1)$ is the effect of collapsing
the complement of a tubular neighborhood $M$ in 
$D^d \times M$ to a point. But the complement is contractible,
so this collapse map is homotopic to the identity.
\end{proof}

\begin{prop}[Compare {\cite[2.9]{Becker-Schultz}}]
\label{fact2} The map $c$ homotopically coequalizes 
$t_M(M/\partial M,*)$ and $\Delta$, i.e.,
$$
c \circ t_M(M/\partial M,*) \,\, \simeq \,\, c\circ \Delta\, .
$$
\end{prop}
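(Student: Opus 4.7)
The plan is to reduce both sides, after precomposition with the Pontryagin--Thom collapse $\alpha\: S^d \to M/\partial M$ of Lemma~\ref{fact1}, to the same stable map $\id_{S^d}\smsh t_M$, and then to invoke the Atiyah $S$-duality encoded in the tubular neighborhood construction of $c$ to upgrade this to the asserted equality.

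For the left hand side, I would use the identification $t_M(M/\partial M,*) \simeq 1_{M/\partial M}\smsh t_M$, which was derived earlier from axioms A2 and A3. Precomposing with $\alpha$ and applying the homotopy commutative diagram \eqref{alpha-diag} together with Lemma~\ref{fact1} gives
\[
c \circ t_M(M/\partial M,*) \circ \alpha \,\simeq\, c \circ (\alpha \smsh \id_{M_+}) \circ (\id_{S^d}\smsh t_M) \,\simeq\, \id_{S^d}\smsh t_M\, .
\]

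For the right hand side, I would recognize $c \circ \Delta \circ \alpha$ as an incarnation of the Becker--Gottlieb construction of $t_M$ from Section~\ref{construct}. The codimension zero embedding $(M,\partial M)\subset (D^d,S^{d-1})$ furnishes an Atiyah $S$-duality with $D(M_+) \simeq \Sigma^{-d}(M/\partial M)$; under this model, $\alpha$ represents (the $d$-fold suspension of) the coevaluation $d^*\: S^0 \to M_+ \smsh D(M_+)$, while the evaluation $d\: M_+ \smsh D(M_+) \to S^0$ is represented by the projection onto the sphere factor of $c$. The defining formula $t_M \simeq (\id\smsh d)(\Delta_{M_+}\smsh \id)\, d^*$ then unpacks, after the evident factor swap, to
\[
c \circ \Delta \circ \alpha \,\simeq\, \id_{S^d}\smsh t_M\, .
\]

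Combining the two computations yields $c \circ t_M(M/\partial M,*) \circ \alpha \simeq c \circ \Delta \circ \alpha$. To promote this to equality before precomposing with $\alpha$, I would use that the pair $(\alpha, c)$ exhibits $M/\partial M$ as the $d$-fold shifted Atiyah $S$-dual of $M_+$, so that stable maps $M/\partial M \to S^d\smsh M_+$ are detected by the duality adjoint, which is computed in terms of $\alpha$-precomposition together with $\Delta_{M_+}$. Both $c \circ t_M(M/\partial M,*)$ and $c \circ \Delta$ incorporate the diagonal $\Delta_{M_+}$ through their construction and agree after $\alpha$-precomposition, so they represent the same class. The main obstacle in this plan is the last step: verifying rigorously that the specific Atiyah duality model $(\alpha, c)$ arising from the tubular neighborhood of the diagonal in $M \times M$ is compatible with the abstract $S$-duality used in Section~\ref{construct} to define $t_M$. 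An alternative, which sidesteps this issue, is to reduce via a handle decomposition of $(M,\partial M)$ to the base case $M=D^n$ (where $M/\partial M = S^n$, both sides are computable directly, and the product and normalization axioms trivially match them), with axiom A4 providing the inductive step—this avoids the duality promotion at the cost of a more combinatorial argument.
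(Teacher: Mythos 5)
Your main plan does not close the argument, and the gap is structural rather than cosmetic. Precomposing with the Pontryagin--Thom collapse $\alpha\colon S^d \to M/\partial M$ and showing $c \circ t_M(M/\partial M,*) \circ \alpha \simeq c\circ\Delta\circ\alpha$ (which, for the left side, does follow from \eqref{alpha-diag} and Lemma~\ref{fact1}) is not sufficient: $\alpha$ is not an equivalence and is not monic in the stable homotopy category, so equality after $\alpha$-precomposition does not imply equality of the original maps. Your proposed remedy---that $(\alpha,c)$ exhibits an Atiyah duality detecting maps $M/\partial M \to S^d\smsh M_+$ via ``$\alpha$-precomposition together with $\Delta_{M_+}$''---conflates precomposition with the actual duality adjoint, which involves smashing with $M_+$ and pairing, not just composing with $\alpha$. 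There is also a latent circularity: identifying $c\circ\Delta\circ\alpha$ with the Becker--Gottlieb construction of $t_M$ essentially asserts the conclusion of the point-case of Theorem~\ref{first}, whose proof in the paper uses Proposition~\ref{fact2}; and in any case it would only show the desired coequalization for the specific Becker--Gottlieb transfer, not for an arbitrary $t$ satisfying the axioms.

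The missing ingredient is the additivity axiom A4, which your main line never invokes. The paper's proof decomposes the fibration $\pi_1\colon (M\times M,\partial M\times M)\to (M,\partial M)$ according to the pushout of the tubular disk bundle $(D,D_0)$ of the diagonal, the complement $(C,C_0)$, and the sphere bundle $(S,S_0)$; this gives fibers $D^d$, $M_0$, $S^{d-1}$, and A4 yields
$$
t_M(M,\partial M)=j_1 t_{D^d}(M,\partial M)+j_2 t_{M_0}(M,\partial M)-j_{12} t_{S^{d-1}}(M,\partial M).
$$
Normalization identifies the first term (after $j_1$) with $\Delta$, and the other two terms factor through $C/C_0$, which $c$ collapses, so they die after applying $c$. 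This produces $c\circ t_M(M/\partial M,*)\simeq c\circ\Delta$ directly, without any lifting argument. Your secondary suggestion---handle decomposition of $(M,\partial M)$ with A4 as the inductive step---is in the right spirit because it does bring A4 into play, but it is only a sketch and would require a careful bookkeeping that the paper's single application of A4 to the diagonal tubular neighborhood avoids.
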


\begin{proof}
The argument will use the commutative pushout diagram
of pairs
$$
\xymatrix{
(S,S_0) \ar[r]\ar[d] & (C,C_0)\ar[d]\\
(D,D_0) \ar[r]  &(M\times M,(\partial M)\times M)
}
$$
where $(D,D_0)$ denotes unit tangent disk bundle of $(M,\partial M)$,
$(S,S_0)$ is the unit sphere bundle and $(C,C_0)$ is the complement.
Let $$\pi_1\: (M\times M,(\partial M)\times M) \to (M,\partial M)$$ be the
first factor projection. Let $p_D \: (D,D_0) \to (M,\partial M)$
be its restriction to $(D,D_0)$; this is fibration pair
with fiber $D^d$.  Similarly, let 
$p_C\: (C,C_0) \to (M,\partial M)$ and $p_S\: (S,S_0) \to (M,\partial M)$
be the restrictions to $(C,C_0)$ and $(S,S_0)$ respectively. 
Each of these is also a fibration pair. 
The fiber of $p_S$ is $S^{d-1}$ and
the fiber of $p_C$ is $M_0$, where
$M_0$ is the effect of removing an open ball from the interior of $M$. We
therefore have a pushout square of fibers
$$
\xymatrix{
S^{d-1} \ar[r] \ar[d] & M_0\ar[d] \\
D^d \ar[r] & M \, .
}
$$
Then the additivity axiom implies
$$
t_M(M,\partial M) = j_1 t_{D^d}(M,\partial M) + j_2 t_{M_0}(M,\partial M)
-  j_{12} t_{S^{d-1}}(M,\partial M) \, ,
$$
where $j_S$ for $S\subset \{1,2\}$ is induced by the evident
inclusion map into $M$. 

Then by the homotopy invariance 
and normalization axioms  
$$
t_{D^d}(M,\partial M) = i\circ t_*(M,\partial M) = i \, ,
$$
where $i\:M/\partial M\to D/D_0$ arises from the zero section.
By definition,  $j_1i$ is the reduced diagonal map
$\Delta\: M/\partial M\to M/\partial M \smsh M_+$. Consequently, 
$$
j_1t_{D^d}(M,\partial M) = \Delta\, .
$$
In particular, 
$$c\circ j_1t_{D^d}(M,\partial M) = c\circ \Delta\, .$$

To complete the proof of the proposition, it will suffice 
to show that  $c$ applied to each of the
terms $j_2 t_{M_0}(M,\partial M)$ and
$j_{12} t_{S^{d-1}}(M,\partial M)$ is trivial, for this will 
yield  $c \circ t_M(M,\partial M) = c\Delta$.

To see why $c\circ j_2 t_{M_0}(M,\partial M)$ is trivial, recall
that $c$ is defined by collapsing $C \subset M\times M$ to a point, 
whereas  $j_2t_{M_0}(M,\partial M)$ is given by a composite of the form
$$
\begin{CD}
M/\partial M @>t_{M_0}(M,\partial M) >> 
C/C_0 @>>> (M\times M)/(\partial M \times M)\, .
\end{CD}
$$
The triviality of $cj_2 t_{M_0}(M,\partial M)$ therefore follows
from the fact that it factors through $C/C_0$. A similar argument shows that
$cj_{12} t_{S^{d-1}}(M,\partial M)$ is trivial.
\end{proof}

\begin{proof}[Proof of Theorem \ref{first} when $B$ is a point]
By \ref{fact1} and \ref{fact2} and diagram \eqref{alpha-diag} we have
$$
t_M \,\, = \,\,  c \circ (\alpha\smsh 1) \circ t_M \,\, = \,\, 
c\circ t_M(M/\partial M,*) \circ \alpha \,\, = \,\, 
c\circ \Delta \circ \alpha \, .
$$
This shows that $t_M$ is determined by the maps $c,\Delta$ and $\alpha$
whose definition is independent of $t_M$.
\end{proof}

\section{Interpretation}\label{interp}
Motivated by Peter May's paper on the Euler characteristic
in the setting of  derived categories \cite{May_additivity},
we give an alternative interpretation of what we have just shown
in terms of the algebra of the stable homotopy category.
This section is independent of the rest of the paper.

Given $F$ as above, recall that
$D(F_+)$ denotes the $S$-dual of
of $F_+$. Then $D(F_+)$ is a ring spectrum with unit
$u\: S^0 \to D(F_+)$ which represents 
a desuspension of the map $\alpha$ appearing above. 

In what follows we  consider  $F_+$ 
as an object of the stable homotopy category. Then we have an action
$$
{\mu}\: D(F_+) \smsh F_+ \to F_+
$$
which is defined as the 
formal adjoint to the map $D(F_+) \to \hom(F_+,F_+)$
given by mapping a stable map $f\:F_+ \to S^0$ to $f\smsh 1_{F_+}$
composed with the diagonal of $F_+$. The map ${\cal \mu}$ is a
homotopy theoretic version of the collapse map $c$ 
described above. Lemma \ref{fact1} in this language
asserts that $F_+$ is a $D(F_+)$-module.

Furthermore,  $F_+$ is a coalgebra in the stable category,
and one has a co-action map
$$
\kappa \: D(F_+)\to D(F_+) \smsh F_+
$$
which expresses $D(F_+)$ as an $F_+$-comodule: it
can be defined as the linear dual of $\mu$
(= maps into $S^0$). Then
$\kappa$ is a homotopy theoretic version of the diagonal
map $\Delta$.

\begin{cor}\label{stable-cor}
With respect to above, we have:
\begin{itemize}
\item the diagram
$$
\xymatrix{
S^0\smsh F_+ \ar[r]^{u\smsh 1\quad } & D(F_+) \smsh F_+ \\
S^0 \ar[r]_{u\qquad} \ar[u]^{t_F} & D(F_+)\smsh S^0\ar[u]_{1_{D(F_+)}\smsh t_F}
}
$$
is homotopy commutative (cf.\
diagram \eqref{alpha-diag});
\item The composite
$$
\begin{CD}
S^0\smsh F_+ @> u\smsh 1>>  D(F_+) \smsh F_+ @>\mu >> F_+ =  S^0 \smsh F_+
\end{CD}
$$
is homotopic to the identity;
\item The map $\mu$ homotopically coequalizes $1_{D(F_+)}\smsh t_F$ and
$\kappa$ (cf.\ \ref{fact2}).
\end{itemize}
\end{cor}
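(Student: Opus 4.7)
The plan is to recognize each of the three bullets as a stable homotopy category translation of a statement already proved in Section \ref{B=pt}, and to transport each geometric assertion through Spanier-Whitehead duality. The bridge is the codimension zero embedding $M\subset \Bbb R^d$ representing $F$, which yields a preferred equivalence $\Sigma^{-d}(M/\partial M) \simeq D(M_+) \simeq D(F_+)$ in the stable homotopy category.

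Under this duality I would establish three identifications. First, the Pontryagin-Thom collapse $\alpha\:S^d \to M/\partial M$ represents $\Sigma^d u$, where $u\:S^0 \to D(F_+)$ is the unit; this is the standard description of the unit of a Spanier-Whitehead dual. Second, the reduced diagonal $\Delta\:M/\partial M \to M/\partial M \smsh M_+$ represents $\Sigma^d\kappa$: by construction $\kappa$ is the $S$-dual of the module action $\mu$, and the diagonal of $F_+$ entering the definition of $\mu$ is precisely the thing to which the geometric $\Delta$ is adjoint under duality. Third, and most delicately, the tubular-neighborhood collapse $c\:M/\partial M \smsh M_+ \to S^d \smsh M_+$ represents $\Sigma^d\mu$. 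I would verify this by writing $\mu$ in its adjoint form as $(\text{ev}\smsh 1)\circ (1\smsh \Delta_{F_+})$, and then checking that the Thom-Pontryagin collapse on the tubular neighborhood of the diagonal embedding $(M,\partial M)\hookrightarrow (M\times M,\partial M\times M)$ geometrically realizes the evaluation pairing $D(F_+)\smsh F_+ \to S^0$ smashed with the identity of $F_+$.

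With these identifications in place, each bullet of the corollary becomes a direct restatement of a previously proved fact. Bullet (1) is the commutativity of diagram \eqref{alpha-diag}, which was deduced in Section \ref{B=pt} from naturality (A1), products (A3), and normalization (A2). After desuspending by $d$ and applying the identifications above, bullets (2) and (3) become precisely Lemma \ref{fact1} and Proposition \ref{fact2} respectively. The main obstacle is pinning down the identification $c \simeq \Sigma^d\mu$: the geometric content previously carried by the tubular-neighborhood collapse has to be recognized inside the purely formal module structure of $F_+$ over its Spanier-Whitehead dual, and one must check that the two a priori different descriptions of the evaluation pairing agree. Once that comparison is established, the corollary is a mere repackaging of the results of Section \ref{B=pt}.
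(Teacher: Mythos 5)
Your proposal follows exactly the approach the paper intends: Section~\ref{interp} presents Corollary~\ref{stable-cor} as a translation of diagram~\eqref{alpha-diag}, Lemma~\ref{fact1}, and Proposition~\ref{fact2} into the algebra of the stable homotopy category, with the identifications $u\leftrightarrow\Sigma^{-d}\alpha$, $\mu\leftrightarrow\Sigma^{-d}c$, and $\kappa\leftrightarrow\Sigma^{-d}\Delta$ asserted in the paragraphs preceding the corollary. Your proposal spells out those identifications and correctly flags the comparison $c\simeq\Sigma^d\mu$ as the only nontrivial check, which is precisely the content the paper leaves implicit.
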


From \ref{stable-cor} we immediately infer
$$
t_F \,\, = \,\, \mu\circ \kappa \circ u \, .
$$

\section{Proof of Theorem \ref{first}}\label{smoothings}

Let $p\: E\to B$ be a fibration with homotopy finite
fibers. Assume
$p$ admits a compact TOP reduction
$q\: W \to B$.
When $B$ is homotopy finite, one can replace $q$ with its
topological stable normal bundle along the fibers to obtain a new 
compact TOP reduction which is a
codimension zero subbundle
of the trivial bundle $B\times \Bbb R^j$ for $j$ sufficiently large
(cf.\ \cite[p.\ 599]{Becker-Schultz}, \cite{Rourke-Sanderson}).

Consequently, we can assume without loss in generality
that $q$ comes equipped with
a fiberwise codimension zero topological embedding
 $W\subset B \times \Bbb R^d$. We let
$$
\partial^v W \to B
$$
be the {\it fiberwise boundary} of $q$. This is the bundle
whose fiber at $b \in B$ is given by $\partial W_b$.

The idea of the proof of Theorem \ref{first}
will be to adapt the method of  \S \ref{B=pt} 
to the fiberwise topological setting.

The  proof will hinge upon the following
structure, which is assumed to vary continuously in $b\in B$:
\begin{itemize}
\item The fibers $W_b$ come equipped with a degree one
collapse map $S^d \to W_b/\partial W_b$ and
\item The diagonal map $(W_b,\partial W_b) \to
(W_b\times W_b ,(\partial W_b)\times W_b)$ has
a compact tubular neighborhood.
\end{itemize}
The first of these properties is given by taking the
Thom-Pontryagin collapse of the embedding $W_b \subset \{b\}\times \Bbb R^d$.
The second property is discussed in \cite[p.\ 599]{Becker-Schultz}.

\begin{proof}[Proof of Theorem \ref{first}]

As in \S \ref{B=pt}, the proof begins by considering a
commutative diagram (the fibered analogue of diagram \eqref{BS_diag1}):
$$
\xymatrix{
(S^d\times W,W) \ar[rr]^{(\alpha \times_B 1,1)\qquad} 
\ar[d]_{(1\times q,q)} && 
((W/\!\!/\partial^v W)\times_B W,W)
\ar[d]^{(q^*,q)} \\
(S^d\times B,B) \ar[rr]_{(\alpha,1)}&&(W/\!\!/\partial^v W,B) \, .
}
$$
Here $W/\!\!/\partial^v W$ denotes the 
pushout of the diagram $B \leftarrow \partial^v W \subset W$,
and 
$q^*$ denotes the pullback of $q\: W\to B$ along 
the map $W/\!\!/\partial^v W\to B$. Note that the fibers of
 $W/\!\!/\partial^v W\to B$ are given by $W_b/\partial W_b$.
The map  $\alpha$ is given by the fiberwise Thom-Pontryagin collapse
map of the codimension zero embedding $W\subset \Bbb R^d$.

Given a refined transfer $t$, we apply it to the above
 and appeal to the naturality
and product axioms to obtain a homotopy commutative diagram
$$
\xymatrix{
\Sigma^d_B W^+ \ar[r]^{\alpha \smsh_B 1\qquad} & 
(W/\!\!/\partial^v W) \smsh_B W^+  \\
\Sigma^d_B B^+\ar[r]_\alpha \ar[u]^{\Sigma^d_B t(q)} & W/\!\!/\partial^v W 
\ar[u]_{t(q^*,q)}\, ,
}
$$
where $\Sigma^d_B W^+$ denotes the $d$-fold fiberwise suspension
of $W^+ \to B$ (note that $\Sigma^d_B B^+ = B \times S^d$). 
The vertical maps are refined transfer maps associated with
fibration pairs. This is the fibered analog of diagram \eqref{alpha-diag}.

Let 
$$
c\: (W/\!\!/\partial^v W) \smsh_B W^+ \to \Sigma^d_B W^+
$$
be the fiberwise collapse map
which on each fiber, maps the complement data of the fiberwise
embedding of the diagonal to a point.

To complete the proof, we appeal to the following two assertions,
which are fiberwise versions of \ref{fact1} and \ref{fact2}, and are proved
similarly (alternatively, these are proved in
the smooth case in \cite[lem.\ 2.5, eq.\ 2.9]{Becker-Schultz}
and their proof adapts in our case).
\medskip

{\flushleft \it Assertion 1.}  The composition 
$
c\circ (\alpha \smsh_B 1)\: \Sigma^d_B W^+ \to \Sigma^d_B W^+
$
is fiberwise homotopic to the identity.
\medskip 

{\flushleft \it Assertion 2.} 
The map $c$ homotopically coequalizes
the $t(q^*,q)$ and the fiberwise reduced diagonal map
$\Delta\: W/\!\!/\partial^v W  \to  (W/\!\!/\partial^v W) \smsh_B W^+$.
That is, $c\circ t(q^*,q) \simeq c\circ \Delta$.
\medskip

Given these assertions, we obtain the equation
$$
\Sigma^d_B t(q)\,\, \simeq \,\, c\circ \Delta\circ \alpha\, ,
$$
which uniquely determines $t(q)$.
\end{proof}

\section{Proof of Theorems \ref{second} and \ref{split}}

We first give the definition of the assembly map.
Given a homotopy functor 
$$
f\:\text{Top} \to \text{Spectra}
$$
from spaces to spectra, the assembly map is
an associated natural transformation of homotopy functors
$$
f^\%(X) \to f(X)\, 
$$
giving the best approximation to
$f$ on the left by a homology theory. The simplest
construction of  $f^\%(X)$ is to take
the homotopy colimit
$$
\underset{\Delta^k \to X}{\text{ hocolim }} f(\Delta^k)
$$
where the indexing is given by the category
whose objects are the singular simplices $\Delta^k\to X$ and
whose morphisms are given by restriction to faces.

The natural transformation $f^\%(X) \to f(X)$ is induced from the evident
maps $f(\Delta^k) \to f(X)$ associated with each
singular simplex $\Delta^k \to X$. 
Using the weak equivalence $f(\Delta^k) \to f(*)$, we obtain
a weak equivalence of functors
$$
f^\%(X) \overset\sim\to X_+\smsh f(*) \, .
$$
Consequently, we
may think of the assembly map as a natural transformation
$$
X_+ \smsh f(*) \to f(X) \, .
$$
For more details, see \cite{Weiss-Williams_assembly}.

\begin{proof}[Proof of  Theorem \ref{split}] We will give two proofs.
The first, suggested by a referee, shows that the assembly map
is $(2r-c)$-split for some constant $c\ge 0$.

Recalling the decomposition
$$
A(X) \,\, \simeq \,\, \Sigma^\infty (X_+) \times 
\text{\it Wh}^{\text{diff}}(X)
$$
(\cite{Wald_concord}),
it will suffice to split the assembly map of each factor.
The assembly map for $\Sigma^\infty (X_+)$ is the identity map, so
it clearly splits. The assembly map for $\text{\it Wh}^{\text{diff}}(X)$
has the form
$$
X_+ \smsh \text{\it Wh}^{\text{diff}}(*) \to \text{\it Wh}^{\text{diff}}(X)\, .
$$
There is a natural map
$$
\text{\it Wh}^{\text{diff}}(*) 
\to 
X_+ \smsh \text{\it Wh}^{\text{diff}}(*)
$$
which arises from the basepoint of $X$.
The composite with the assembly map yields
the map
$$
\text{\it Wh}^{\text{diff}}(*) \to \text{\it Wh}^{\text{diff}}(X)
$$
that is induced by the  inclusion of the basepoint.
As noted by Waldhausen \cite[p.\ 153]{Wald_manifold}, for
$r$-connected $X$, this map is approximately $2r$-connected.
This completes the first proof.

Our second proof shows that the
assembly map for $A(X)$ is $2r$-split.
It uses the commutative diagram
of spectra
\begin{equation} \label{Good_diagram}
\xymatrix{
A(X) \ar[r] \ar[d] & X_+\smsh S^0\ar[d] \\
A(*) \ar[r] & S^0
}
\end{equation}
where the vertical maps are induced by the map $X\to *$,
and the horizontal ones are given using Waldhausen's
splitting of $A(X)$ mentioned above.

Suppose that $X$ is $r$-connected. Then 
Goodwillie has shown that the square \eqref{Good_diagram} 
is $(2r+1)$-cartesian (\cite[cor.\ 3.3]{Good_calc1}). 
It follows that the square
$$
\xymatrix{
X_+ \smsh S^0 \ar[r] \ar[d] & A(X)\ar[d] \\
S^0 \ar[r] & A(*)
}
$$
is $2r$-cartesian, where the horizontal maps
are given by the natural transformation
from stable homotopy to the algebraic $K$-theory of spaces
induced from the inclusion functor from 
finite sets to finite spaces.

Using the basepoint for $X$,
it follows that
$$
X\smsh S^0 \to A(X) \to A(*)
$$
is a homotopy fiber sequence up through dimension $2r$
in the sense that the map from $X\smsh S^0$ to the homotopy fiber
of the map $A(X) \to A(*)$ is  $2r$-connected.
Furthermore, we have a homotopy fiber sequence
$$
A(*) \smsh X \to A^\%(X) \to A(*)\, .
$$

Consider the diagram
$$
\xymatrix{
X\smsh A(*) \ar[r]\ar[d] &  A^\%(X) \ar[r]\ar[d]  & A(*)
\ar@{=}[d] \\
 X\smsh S^0\ar[r]&  A(X)  \ar[r] & A(*) 
}
$$
where the middle vertical map is the assembly map, and
the left vertical map is given by smashing the 
map $A(*) \to S^0$ with the identity map of $X$.
Since the lower row is a homotopy fiber
sequence up through dimension $2r$, and
the map $A(*) \smsh X \to S^0 \smsh X$ is 
homotopically split, it follows that the assembly map
$$
A^\%(X)\to A(X)
$$
is $2r$-split. The functor $B(X)$ in this case is given by the wedge
$$
(X\smsh S^0) \vee A(*)\, ,
$$
and the map $B(X) \to A^\%(X)$ is given using
the evident map 
$X\smsh S^0 \to X\smsh A(*) \simeq  A^\%(X)$ together
with the map $A(*) \to A^\%(X)$ coming from the basepoint
of $X$.
\end{proof}

\begin{rem} There is no such
 stable splitting of the assembly map for $A(X)$
on the category of {\it un}based spaces. For suppose there were a homotopy functor $B(X)$ defined on unbased spaces equipped with a natural transformation 
$B(X) \to A^\%(X)$ such that the composite $B(X) \to A^\%(X) \to A(X)$
is $(2r-c)$-connected for $r$-connected $X$. Taking the 
first stage of the Goodwillie tower of these functors yields
maps
$$
B(X) \to P_1B(X) \to P_1A^\%(X) \to P_1 A(X) 
$$
such that the composite $B(X) \to P_1A(X)$ is a weak equivalence.
Since $A^\%(X) = P_1A^\%(X)$, we infer that 
$A^\%(X) \to P_1A(X)$ has a section up to homotopy. 
But this is impossible when $X = \emptyset$ is the empty space, since
$A^\%(\emptyset) = *$ whereas $P_1 A(\emptyset)$ is not contractible.
We are indebted to a referee for explaining this argument to us.
\end{rem}

\begin{proof}[Proof of Theorem \ref{second}]
Consider the fibration 
$p\: E \to B$ with section whose underlying map is $(r+1)$-connected. Then
the fiberwise version of the assembly map
$$
A^\%_B(E) \to A_B(E)
$$
is defined and is
a map of fibered spectra over $B$ 
(cf.\  \cite[p.\ 51]{Dwyer-Weiss-Williams}). Applying
the method of our second proof of Theorem \ref{split}  in a fiberwise manner,
we obtain a fibered spectrum ${\cal B}_B(E)$ and a map
${\cal B}_B(E)\to A^\%_B(E)$ such that the composite
$$
{\cal B}_B(E)\to A^\%_B(E) \to A_B(E)
$$
is $2r$-connected (note: the fiber of ${\cal B}_B(E)$ at a point
$b\in B$ is identified with $B(E_b)$).

By slight abuse of notation, consider the fiberwise
assembly map as a map of (associated) fiberwise infinite
loop spaces. Then assuming that $B$ has the homotopy
type of a cell complex of dimension $\le 2r$, 
the previous paragraph shows that induced map of section spaces
$$
\text{sec}(A^\%_B(E) \to B) \to 
 \text{sec}(A_B(E)\to B)
$$
is surjective on path components. 

By the ``Converse Riemann-Roch Theorem'' of 
Dwyer, Weiss and Williams (\cite[cor.\ 10.18]{Dwyer-Weiss-Williams}),
it follows that $p\: E\to B$ admits a compact TOP reduction.
\end{proof}

\section{Proof of Theorems \ref{third} and \ref{third_add}} \label{D}

\begin{proof}[Proof of Theorem \ref{third}] 
Recall
that Waldhausen's space $A(*)$ has the same 
rational homotopy type as $K(\Bbb Z)$,
the algebraic $K$-theory space of the integers. 
Borel showed that the rational cohomology
of $K(\Bbb Z)$ is an exterior algebra on classes
$b_{4k+1}$ in degree $4k+1$ for $k > 0$ \cite{Borel}.
Therefore, $H^{4k+1}(A(*);{\Bbb Z})$ has 
a nontrivial torsion free summand for $k>0$.

Let $\vee_k S^n$ be a $k$-fold wedge of $n$-spheres and let 
${\cal H}_k^n$ denote
the topological monoid of based homotopy equivalences of $\vee_k S^n$ (cf. \cite[p.\ 385]{Wald_LNM}. 
Then we have a homomorphism ${\cal H}_k^n\to {\cal H}_k^{n+1}$ given by 
suspension and a homomorphism ${\cal H}_k^n \to {\cal H}_{k+1}^n$ given
by wedging on a single copy of the identity map. One of the standard
definitions of $A(*)$ is 
$$
{\Bbb Z} \times \lim_{k,n} B {\cal H}_k^n \, {}^+ \, ,
$$
where  ``$B$'' denotes the classifying space functor, and ``${}^+$'' denotes
Quillen's plus construction. In particular, the natural  map 
$$
\iota\: {\Bbb Z} \times \lim_{k,n} B {\cal H}_k^n \to 
A(*) ,
$$
(from the space to its plus construction) is a rational 
cohomology isomorphism. 
Let $x\in H^{4i+1}(A(*);{\Bbb Z})$ be any non-torsion element. Then the restriction
$\iota^*x \in H^{4i+1}(B {\cal H}_k^n;{\Bbb Z})$ is also non-torsion when $k$ and
$n$ are chosen sufficiently large. Finally, let 
$$
B \subset B {\cal H}_k^n
$$
be a connected, finite subcomplex which supports the class 
$\iota^* x$. This inclusion can be thought
of as a classifying map for a fibration 
$$
p\:E \to B
$$
whose fiber at the basepoint is $\vee_k S^n$. 

Theorem \ref{third} will now follow from:

\begin{cl} The fibration $p\: E\to B$ does not admit a compact fiber smoothing. However,
$p$ admits a compact {\rm TOP} reduction 
provided $n$ is sufficiently large.
\end{cl}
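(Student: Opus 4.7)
The plan is to prove the two halves of the claim by largely independent arguments.

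\textbf{Compact TOP reduction.} The strategy is to apply Theorem \ref{second}. Because the classifying map $B \to B\mathcal{H}_k^n$ lands in the space of \emph{based} self-homotopy equivalences of $\vee_k S^n$, the fibration $p\colon E \to B$ has a canonical section coming from the constant basepoint. The fiber $\vee_k S^n$ is $(n-1)$-connected, so $p$ is $n$-connected. Setting $r = n-1$, the dimensional hypothesis $\dim B \le 2r = 2n-2$ holds whenever $n \ge (\dim B+2)/2$; this is the precise meaning of ``$n$ sufficiently large'' in the claim. Theorem \ref{second} then produces a preferred compact TOP reduction.

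\textbf{Non-existence of a compact fiber smoothing.} Here the plan is to argue by contradiction, using the smooth Riemann--Roch theorem of Dwyer--Weiss--Williams. Suppose $p$ admits a compact fiber smoothing by a compact smooth manifold $M \simeq \vee_k S^n$. In high codimension $M$ can be taken to be a handlebody with spine $\vee_k S^n$, and hence has stably trivial tangent bundle. The classifying map $f\colon B \to B\mathcal{H}_k^n$ would factor as $B \to B\text{Diff}(M) \to B\mathcal{H}_k^n$, so the same is true of its composition with the natural map $B\mathcal{H}_k^n \to A(*)$. By the smooth Riemann--Roch theorem, the image of $B\text{Diff}(M)$ in the Borel summand of $H^{4i+1}(A(*);\mathbb{Q})$ is controlled rationally by Pontryagin classes of the vertical tangent bundle; since these are forced to vanish in our situation, the class $\iota^*x$ would restrict trivially to $B\text{Diff}(M)$ and thence to $B$. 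This contradicts the non-torsion status of $\iota^*x|_B$, which was our reason for choosing $B$.

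\textbf{Main obstacle.} The delicate step is rigorously verifying that $\iota^*x$ is killed by any factorization through $B\text{Diff}(M)$ with $M$ stably parallelizable. This requires tracking the composition $B\text{Diff}(M) \to A(*)$ through Waldhausen's splitting $A(X) \simeq \Sigma^\infty X_+ \times \text{Wh}^{\text{diff}}(X)$ used in the proof of Theorem \ref{split}, and invoking the DWW smooth Riemann--Roch theorem to express the projection onto the Borel component of $\text{Wh}^{\text{diff}}(*)_{\mathbb{Q}}$ in terms of Pontryagin classes of the vertical tangent bundle; these vanish by our choice of $M$. Identifying the Borel class as the exact obstruction to DIFF-smoothability, as opposed to the weaker TOP obstruction handled by Theorem \ref{second}, is the conceptual crux of the argument.
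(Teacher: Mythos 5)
Your treatment of the compact TOP reduction is correct and matches the paper: apply Theorem~\ref{second} using the section given by the wedge point and the $n$-connectivity of $p$.

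The argument for non-existence of a compact fiber smoothing has a genuine gap, and it is precisely the point the paper takes care to address. The DWW criterion cited in the paper is about the \emph{$A$-valued trace} $\chi_A(p)\colon B\to A(*)$, which records the assignment $x\mapsto (F_x)_+$ (the fiber with an added disjoint basepoint). You instead work with the composite $B \to B\mathcal{H}_k^n \to A(*)$, which records $x\mapsto F_x$ (the fiber, already based by the wedge point). These are genuinely different maps into $A(*)$; the factorization-through-$Q(S^0)$ conclusion supplied by Dwyer--Weiss--Williams applies a priori only to the former. The paper bridges the gap with Waldhausen's additivity theorem: the two maps differ by $H$-space translation by the constant at $S^0$, so they agree on rational cohomology in positive degrees, and only after that bridge can one transfer rational non-triviality from the classifying map to the trace. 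Without that step your contradiction does not close.

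Two secondary issues. First, the paper's argument never needs Pontryagin classes: once the trace is known to factor through $Q(S^0)\to A(*)$, rational triviality in positive degrees is immediate because $Q(S^0)$ is rationally trivial there; invoking a Pontryagin-class version of the index theorem introduces technical burden (e.g.\ identifying exactly which cohomology classes are ``controlled'') that you would still have to discharge rigorously. Second, the assertion that a compact fiber smoothing can be taken with $M$ a stably parallelizable handlebody is not justified as stated; a compact smooth manifold homotopy equivalent to $\vee_k S^n$ need not have trivial stable tangent bundle, and while one can arrange a codimension-zero smooth reduction along the lines of the normal-bundle replacement in Section~\ref{smoothings}, you would need to say so, and it is simply unnecessary for the cleaner route via the trace.
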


The second part of the claim follows directly from Theorem \ref{second}. 
To prove the first part, 
it will be sufficient by the work of Dwyer, Weiss and Williams  to 
prove that the {\it $A$-valued trace map}
$$
\chi_A(p)\:B \to A(*)
$$
(cf.\ below) does {\it not} admit a factorization up to homotopy as 
$$
B\to Q(S^0)\to A(*)\, .
$$
This is sufficient because the theory (cf.\ \cite[\S12]{Dwyer-Weiss-Williams}) 
shows that the existence
of a  compact fiber smoothing would imply the existence of
 such a factorization.

Since $Q(S^0)$ has trivial rational cohomology in positive degrees, it
suffices to show that the $A$-valued trace is rationally non-trivial
in cohomology in degree $4i+1$.

We first give a  quick sketch of the construction of the 
$A$-valued trace map using 
the alternative definition of $A(*)$ as the algebraic $K$-theory of the category
of homotopy finite based spaces (with cofibrations and weak equivalences). Deferring
to Waldhausen's notation, let $wR^{\text{hf}}(*)$ be the category
 whose objects  are based, homotopy finite 
cofibrant topological spaces, and whose morphisms
are weak homotopy equivalences. In particular,
a homotopy finite space $F$ determines an object of
$wR^{\text{hf}}(*)$, namely $F_+$.

Waldhausen also gives a `1-skeleton' inclusion map 
$$
j\: |wR^{\text{hf}}(*)| \to A(*)
$$
(\cite[p.\ 329]{Wald_LNM}),
so we can regard $F_+$ as point of either the realization $|wR^{\text{hf}}(*)|$ or of $A(*)$. 
Apply this construction to each fiber of the fibration $p$. This gives
for each $x\in B$, a point $(F_x)_+ \in A(*)$ which can be arranged 
so as to vary continuously in $x$ (the reader is referred to 
\cite[\S1.6]{Dwyer-Weiss-Williams}
for the details).  This yields the desired trace map 
$\chi_A(p)\:B \to A(*)$.

From the construction we have given, it is immediate
that have a factorization of $\chi_A(p)$ as
$$
\begin{CD}
B @> v >> |wR^{\text{hf}}(*)| @>j >> A(*)\, ,
\end{CD}
$$
where $v$ is a continuous rectification of the map $x\mapsto (F_x)_+$.

On the other hand, since the fibration $p\: E \to B$ comes equipped
with a preferred section (arising from the wedge point), each fiber
$F_x$ is automatically a based space. We therefore have another
object $F_x \in wR^{\text{hf}}(*)$, so we have another map
$$
u\:B \to |wR^{\text{hf}}(*)|
$$
given by $x \mapsto F_x$. 
Waldhausen has also shown (\cite[prop.\ 2.2.5]{Wald_LNM}) 
that 
the component of $|wR^{\text{hf}}(*)|$ which contains 
the fibers $F_x$ is homotopy
equivalent to the classifying space $B{\cal H}_k^n$, and with respect
to this identification,
$u$ can be regarded as the classifying map of the fibration $p$.
In particular, the composition
$$
\begin{CD}
B @>\subset >> B{\cal H}_k^n  \to |wR^{\text{hf}}(*)| @>j >> A(*)
\end{CD}
$$
coincides with $j\circ u$ up to homotopy and is 
therefore rationally non-trivial in degree $4i+1$.

Now, using Waldhausen's additivity theorem (\cite[prop.\ 1.3.2]{Wald_LNM}),
$j\circ v$
coincides with the map $x \mapsto (F_x) \vee S^0$, which is wedge sum
of the map $j\circ u$ and the constant map with value $S^0 \in A(*)$,
and recall that wedge sum gives the $H$-space structure on $A(*)$
 (\cite[p.\ 330]{Wald_LNM}). 
In particular, the maps $j\circ v$ and $j\circ u$ coincide on
rational cohomology in positive degrees, so $j\circ v$ is rationally
non-trivial in degree $4i+1$.

This completes both the proof of the claim and also the proof
of Theorem \ref{third}.
\end{proof} 

\begin{proof}[Proof of Theorem \ref{third_add}]
Waldhausen \cite[\S3]{Wald_manifold} constructs a map
$$
BF \to A(*)
$$
where $F$ is the topological monoid of based 
stable self homotopy equivalences of the sphere. 

Let $F_k$ denote the topological monoid of
based (unstable) self homotopy equivalences of $S^k$.
Then, up to homotopy, the composite
$$
BF_k \to BF  \to A(*)
$$
can be conveniently described as follows. Recall that
$$
wR^{\text{hf}}(*)
$$
is the category of weak equivalences of cofibrant based spaces.
Let $wR^{\text{hf}}(*)_{(S^k)}$ denote the component
of $wR^{\text{hf}}(*)$ that contains the sphere $S^k$.
Then the realization
$$
|wR^{\text{hf}}(*)_{(S^k)}|
$$ 
has the homotopy type of $BF_k$ (\cite[prop.\ 2.2.5]{Wald_LNM}), 
and with respect to this identification, the map
$BF_k \to A(*)$ is  the `1-skeleton'
inclusion 
$$
j\: |wR^{\text{hf}}(*)_{(S^k)}| \subset A(*)
$$
(cf.\  \cite[p.\ 329]{Wald_LNM}).

B\"okstedt and Waldhausen \cite[p.\ 419]{Bok_Wald} 
have shown that the composite
map
$$
BF \to A(*) \to \text{\it Wh}^{\text{diff}}(*)
$$
is non-trivial on homotopy groups in degree three. The
second map in the composite is the splitting map for
$A(*) \simeq Q(S^0) \times \text{\it Wh}^{\text{diff}}(*)$.

By the Freudenthal suspension theorem,
the  homomorphism $$\pi_3(BF_3) \to \pi_3(BF) = \Bbb Z_2$$
is an isomorphism. On the level of spherical
fibrations, this group is generated by
the clutching construction of a map $S^2 \times S^3 \to S^3$
whose associated Hopf construction $S^5 \to S^3$ represents $\eta^2$,
where $\eta \in \pi_1^{\text{st}}(S^0)$ is represented
by the Hopf map $S^3 \to S^2$. The clutching construction
produces the spherical
fibration $S^3 \to E \to S^3$ stated in Theorem \ref{third_add}.

It follows from the computation of B\"okstedt and 
Waldhausen  that the image of
this generator under the homomorphism
$$
\pi_3(BF_3) \to \pi_3(A(*))
$$
is not an element of the subgroup 
$$
{\Bbb Z}_{24} = \pi_3(QS^0) \subset \pi_3(A(*))\, .
$$
From the theory of Dwyer, Weiss and Williams 
\cite[\S12]{Dwyer-Weiss-Williams}, we infer
the fibration fails to have a compact fiber smoothing. 
On the other hand, the fibration
is admits a compact TOP reduction by Theorem \ref{second}. Since this fibration
represents a torsion element, it is not detected rationally.
\end{proof}

\section{Proof of Theorem \ref{trace}} \label{E}

Let $t(p),t'(p)\: B^+\to E^+$ be refined transfers associated
to a fibration $p\: E\to B$. We will show that the traces 
$$
\text{tr}_t(p),\text{tr}_{t'}(p)\: B_+\to S^0
$$ 
coincide. Let $S_Bp\:S_B E\to B$ be the fiberwise suspension of $p$.
Let $C_B p\: C_B E\to B$ be the mapping cone of $p$. Then the inclusion
$B \to C_B E$ is a fiber homotopy equivalence.

Apply the additivity and normalization axioms to the pushout
$$
\xymatrix{
E \ar[r]\ar[d]& C_B E \ar[d]\\
C_B E \ar[r] &  S_B E
}
$$
and then take the associated traces to get
$$
\text{tr}_t (S_B p) = 1 + 1 - \text{tr}_t (p) \, ,
$$
where $1\: B_+ \to S^0$ is the unit map.

Applying fiberwise suspension again, we obtain
$$
\text{tr}_t (S^2_B p) = \text{tr}_t (p) \, .
$$
Iterating this last equation $j$-times, we get
$$
\text{tr}_t (S^{2j}_B p) = \text{tr}_t (p)\, .
$$
If $j$ is sufficiently large, the fibration $S^{2j}_B p$
admits a compact TOP reduction by 
Theorem \ref{second}. By Theorem \ref{first},  $t$ and $t'$ agree on $S^{2j}_B p$. 
Taking traces we conclude $\text{tr}_t (p)= \text{tr}_{t'} (p)$.

\section{Proof of Theorem \ref{kill}} 

Let $m$ be the dimension of the finite complex $X$.
Let $X^k$ denote the $k$-skeleton, and let 
$X^{(k)}$ denote the quotient $X^k/X^{k-1}$.

Consider the fibration $p\: E\to B$.
At each fiber $E_x$, there is a cofibration sequence
of retractive spaces over $E_x$ of the form
$$
(E_x \times X^k) \amalg E_x  \to (E_x \times X^{k+1}) \amalg E_x 
\to E_x \times X^{(k+1)}\, ,
$$
for $k \ge 0$. Denote the sum operation in 
the category of retractive spaces by $+$; this is given by
fiberwise wedge.
By the additivity theorem \cite[prop.\  1.3.2]{Wald_LNM},
we obtain a preferred homotopy class of path in $A(E_x)$
from the sum
$$
(E_x \times X^{k})\amalg E_x \,\, +  \,\, (E_x \times X^{(k+1)})
$$
to $(E_x \times X^{k+1}) \amalg E_x $. 

Summing over $k$, we get a preferred homotopy
class of path in $A(E_x)$ connecting the points 
\begin{equation}\label{alternating}
(E_x \times X) \amalg E_x \qquad \text{ and } \qquad
\sum_{k=0}^m  E_x \times  X^{(k)}\, .
\end{equation}
Let $T_k$ denote a based set having cardinality one more
than the number of $k$-spheres in $X^{(k)}$. Then we get
an identification $T_k \smsh S^k \cong  X^{(k)}$.

As fiberwise suspension
induces the homotopy inverse to the $H$-multiplication
defined by the sum (see  \cite[prop.\ 1.6.2]{Wald_LNM}),
and the sum operation is homotopy commutative, there
is a preferred path between the above and the sum
$$
\sum_{k}  E_x \times  T_{2k} \,  + \, 
\sum_{k}  E_x \times  \Sigma T_{2k+1} \, .
$$
The latter can be rewritten as
$$
E_x \times (T_0 \vee T_2 \vee \cdots) \,\, + \,\, 
E_x \times \Sigma (T_1 \vee T_3 \vee \cdots)
$$
where the based sets $(T_0 \vee T_2 \vee \cdots)$ and
$(T_1 \vee T_3 \vee \cdots)$ have the same cardinality 
under the assumption that the Euler characteristic of $X$ is
zero.
Consequently, if we let $T$ denote $(T_0 \vee T_2 \vee \cdots)$,
the above is identified with 
$$
(E_x\times T) \,\, \vee \,\, \Sigma_{E_x} (E_x \times T)
$$
which, by the additivity theorem has a preferred homotopy
class of path to the zero object.

Now the assignment $x\mapsto (E_x \times X) \amalg E_x$ gives
rise to the generalized Euler characteristic
of the fibration $q\: E\times X \to B$,
which is a section of the fibration $A_B(E\times X) \to B$
(\cite[I.1]{Dwyer-Weiss-Williams}). The above argument
shows that this section is vertically homotopic to
the constant section given by the basepoint of each fiber 
$A(E_x)$. But the basepoint section clearly 
factors through the map $Q_BE\to A_B E$ via the
basepoint section of the fibration $Q_BE\to B$. 
We now apply the converse Riemann-Roch theorem in the smooth
case (\cite[\S12]{Dwyer-Weiss-Williams})  to conclude that 
$q$ admits a compact fiber smoothing.

This completes the proof of Theorem \ref{kill}.

\begin{rem} When $X = (S^1)^{\times k}$ is a torus of sufficiently
large dimension, Theorem \ref{kill} becomes the `closed fiber
smoothing theorem' of Casson and Gottlieb \cite[p.\ 160]{Casson-Gottlieb}.
\end{rem}

\section{Proof of Theorem \ref{decomp}}
\label{TtoE}

By replacing $M$ by $M \times D^2$ if necessary, we can assume that
 $M \subset \Bbb R^m$ is a codimension zero compact connected smooth submanifold
such that $\partial M \subset M$ is $2$-connected.

Let ${\cal E}(M,*)$ denote the geometric realization of
the simplicial monoid whose $k$-simplices are  families of topological embeddings
$$
e\:\Delta^k \times M \to \Delta^k \times M
$$ 
such that $e$ commutes with projection to $\Delta^k$, 
$e$ is a homotopy equivalence and is the identity
when restricted to $\Delta^k \times *$.

Similarlty, let $\text{TOP}(M,*)$ be the geometric realization of the 
simplicial group whose $k$-simplices are self homeomorphism of $\Delta^k \times M$ that
preserve $\Delta^k \times \ast$.
Then one has a forgetful homomorphism
$$
\text{TOP}(M,*) \to  {\cal E}(M,*)
$$
of topological monoids
which induces a map of classifying spaces
$$
B\text{TOP}(M,*) \to B{\cal E}(M,*)\, ,
$$
whose homotopy fiber  is identified with
the Borel construction
$$
E\text{TOP}(M,*) \times_{\text{TOP}(M,*)} {\cal E}(M,*) \, .
$$
The latter may also be identified
the orbit space ${\cal E}(M,*)/\text{TOP}(M,*)$, because the action of
$\text{TOP}(M,*)$ on ${\cal E}(M,*)$ is free.

The orbit space may also be  identified with
${\cal H}(\partial M)$, the space of topological $h$-cobordisms of 
$\partial M$.  This can be seen as follows: let ${\cal E}'(M,*)$
be defined just as ${\cal E}(M,*)$ but where we now require the embedding
$e$ to have image in $\Delta^k \times \text{int}(M)$, where $\text{int}(M)$ is the interior of
$M$. Using a choice of collar neighborhood of $\partial M$, one sees that the inclusion
${\cal E}'(M,*) \subset {\cal E}(M,*)$ is a 
deformation retract. Therefore, the orbit space is also identified with 
the Borel construction of $\text{TOP}(M,*)$ acting on ${\cal E}'(M,*)$.
Define a map  ${\cal E}'(M,*) \to {\cal H}(\partial M)$ by sending
an embedding $e\:\Delta^k\times  M \to \Delta^k\times  \text{int}(M)$
to the $k$-parameter family of $h$-cobordisms 
$$
(\Delta^k \times M) \,\, - \,\, e(\Delta^k \times \text{int}(M)) \, .
$$
Then
$$
\text{TOP}(M,*) \to {\cal E}'(M,*) \to  {\cal H}(\partial M)
$$
is a homotopy fiber sequence (compare \cite[p.\ 170]{WW-Auto}), so the
assertion follows.

Taking classifying spaces, we extend to the right
to obtain a homotopy fiber sequence
$$
{\cal H}(\partial M) \to B\text{\rm TOP}(M,*) \to B{\cal E}(M,*)\, .
$$
Let $B =  B{\cal E}(M,*)$, and let $E\to B$ be the associated universal
fibration with fiber $M$, obtained as follows: 
the tautological action of ${\cal E}(M,*)$ on $M$ gives a Borel construction 
$E{\cal E}(M,*) \times_{{\cal E}(M,*)} M \to B{\cal E}(M,*)$ which is a
quasifibration. Then $E \to B$ is the effect of converting the quasifibration
 into a fibration.

Using this fibration, 
obtains a fiberwise
generalized Euler characteristic
$$
B \overset \chi\to  A_{B}(E)\, .
$$
The restriction
of $\chi$ to $B\text{TOP}(M,*)$ factors through the fiberwise
assembly map
$$
 A^\%_{B}(E) \to A_B(E)
$$
via an ``excisive characteristic'' $\chi^\%$ 
(\cite[7.11]{Dwyer-Weiss-Williams}; here
we are considering the fiberwise assembly map as a map
of fiberwise infinite loop spaces).
The resulting diagram
\begin{equation} \label{dummies}
\xymatrix{
B\text{TOP}(M,*) \ar[rr]^{\chi^\%} \ar[d] && A^\%_B(E) \ar[d] \\
B \ar[rr]_\chi && A_B E \, .
}
\end{equation}
is preferred homotopy commutative.

Taking vertical homotopy fibers, we obtain a map of spaces
$$
{\cal H}(\partial M) \to \Omega \text{\it Wh}^{\text{top}}(M)\, .
$$
This map is a composite of the form
$$
{\cal H}(\partial M) \overset{(a)}\to 
\Omega \text{\it Wh}^{\text{top}}(\partial M) \overset{(b)}\to
\Omega \text{\it Wh}^{\text{top}}(M)\, ,
$$
where the map $(a)$ is an equivalence in
the topological concordance stable range (which is
approximately $m/3$ by \cite{Igusa_stability} and
\cite{Burghelea-Lashof_stability}). 
In particular, by taking
the cartesian product of $M$ with a disk of sufficiently large dimension, 
we can assume
that the map $(a)$ is highly connected. The map $(b)$ is
is induced by applying the functor $\Omega \text{\it Wh}^{\text{\rm top}}$
to the inclusion $\partial M \to M$. If we replace $M$ by
$M \times D^k$, it too becomes highly connected when $k$-grows
because $\partial (M \times D^k) \to M \times D^k$
is at least $(k-1)$-connected, and after applying the functor 
the result is also approximately $k$-connected (since the
same is true for the functor $A^\%$ and also the functor $A$ using, say, 
\cite[cor.\ 3.3]{Good_calc1}).
The upshot of this is that we
can, by replacing $M$ by $M \times D^k$, assume the composite
map $(b)\circ (a)$ a weak equivalence up through any given 
dimension. We will assume this to be the case.

Now let $M$ be $r$-connected.
It follows with respect to our assumptions
that the diagram \eqref{dummies} 
is $2r$-cartesian. By the method of proof of Theorem \ref{second},
we know that the  fiberwise assembly map 
$A^\%_B(E) \to A_B(E)$ is $2r$-split in a preferred way. 
This shows that the map
$$
B\text{TOP}(M,*) \to B{\cal E}(M,*)
$$
is also $2r$-split.
It follows that we have a preferred decomposition of
homotopy groups
$$
\pi_*(\text{TOP}(M,*)) \,\, \cong \,\, \pi_*({\cal E}(M,*)) \oplus
\pi_{*+1}(\text{\it Wh}^{\text{\rm top}}(M))
$$
for $* < 2r-1$. 

To complete the proof we need to identify ${\cal E}(M,*)$.
Let ${\cal I}(M,*)$ be the realization of the simplicial
monoid defined just as  ${\cal E}(M,*)$ but now with immersions
in place of embeddings.
By topological transversality \cite{Kirby-Sieben}, the inclusion map 
$$
{\cal E}(M,*) \to  {\cal I}(M,*)
$$
is a weak equivalence in our range after replacing $M$ with $M \times D^k$ for
$k$ sufficiently large. Finally, let $\tau_M$ be the 
topological tangent microbundle of $M$, which is
a trivial fiber bundle $M \times {\Bbb R^m} \to M$
 since $M$ is a codimension zero submanifold of euclidean space. 
Let $G(\tau_M,*)$ be the (simplicial) monoid whose zero simplices
are pairs $(f,\phi)$ such that $f\: M \to M$ is a 
based self homotopy equivalence
and $\phi\: \tau_M \to \tau_M$ is a fiber bundle isomorphism covering $f$.
The $k$-simplices of $G(\tau_M)$ are families of such pairs parametrized by
the standard $k$-simplex. Then we have an identification
$$
G(\tau_M,*) \,\, =  \,\, G(M,*) \times \text{maps}(M,\text{TOP}_m) \, ,
$$
and the evident map
$$
{\cal I}(M,*) \to G(\tau_M,*) \, ,
$$ is known to be a weak equivalence by immersion theory
\cite[p.\ 137]{Lashof}. Assembling this information 
completes the proof of Theorem \ref{decomp}.

\begin{rem} A more careful statement of
Theorem \ref{decomp} is  as follows. Let $M$ had dimension $m$ and spine dimension $d$.
Let $c$ be the concordance stable range of $M$ (this is the connectivity of
the stabilization map $C(M) \to C(M\times I)$, where $C(M)$ is the smooth concordance 
space of $M$; by \cite{Igusa_stability} one has  $c \ge \max(2m+7,3m+4)$).
Then the map
$$
B\text{\rm TOP}(M,\ast) \to BG(M,\ast)
$$
has a section up to homotopy
 on the $(2r)$-skeleton provided that both $m-d$ and $c$ are greater than
$2r$. Consequently, if the homotopy type of $M$ is held fixed, one needs the dimension of $M$
to approximately exceed both $6r$ and $d+2r$ for there to be a section.
\end{rem}

\section{Appendix: characteristic classes for fibrations}
\label{appendix}

This section, which might be of independent interest, sketches a theory of 
characteristic classes for fibrations with homotopy finite base and fibers.
These classes were implicitly used in section \ref{D}.

Let $B$ be a connected finite complex. Then
as in section \ref{D}, a fibration $p\: E\to B$
with homotopy finite fibers gives an
$A$-valued trace map
$$
\chi_A(p) \:B\to A(*) \, .
$$
Pulling back the Borel classes $y_{4k+1} \in H^{4k+1}(A(*); \Bbb Q)$,
we obtain rational cohomology classes
$$
y_{4k+1}(p) \in H^{4k+1}(B;\Bbb Q), \qquad k > 0\, .
$$
These classes vanish whenever $p$ admits a compact fiber smoothing.
Furthermore, they satisfy the following axioms:
\begin{itemize}
\item (Naturality). The classes $y_{4k+1}(p)$ are natural with respect to 
base change.
\item (Products). For a product fibration 
$p\times p'\: E\times E' \to B\times B'$  with fiber $F\times F'$,
we have
$$
y_{4k+1}(p\times p') \,\, = \,\, 
y_{4k+1}(p)\otimes \chi(F') \,\, + \,\,  \chi(F)\otimes y_{4k+1}(p')\, ,
$$
where $\chi(F) \in H^0(B) \cong {\Bbb Z}$ is the Euler characteristic. 
\item (Additivity). If 
$$
\xymatrix{
E_{\emptyset} \ar[r]\ar[d] & E_2\ar[d]\\
E_1 \ar[r] & E
}
$$
is a homotopy pushout of fibrations over $B$ having homotopy
finite fibers, then 
$$
y_{4k+1}(p) \,\, =\,\,  y_{4k+1}(p_1) + y_{4k+1}(p_2) - y_{4k+1}(p_{12})\, ,
$$
where $p_S \: E_S \to B$ for $S\subsetneq \{1,2\}$. 
\end{itemize}

\begin{rems} (1). The classes $y_{4k+1}(p) \in H^{4k+1}(B;\Bbb Q)$ are primary obstructions
to finding a compact fiber smoothing. When there is a
compact fiber smoothing, one has the higher Reidemeister torsion classes 
$$
\tau_{4k}(p) \in H^{4k}(B;\Bbb Q)
$$
defined by Igusa \cite{Igusa_book}. One can view the latter
as a corresponding theory of secondary characteristic classes
of the fibration $p$ that depend on the specific choice
of compact fiber smoothing.
\smallskip

{\flushleft (2).}  Given $p\: E \to B$, let $q\: E\times X\to B$
be the effect of taking cartesian product a map $X\to *$ where $X$ is 
a finite complex  having zero Euler characteristic. 
Then $y_{4k+1}(q)$ vanishes by the product axiom (compare 
Theorem \ref{kill}).
\end{rems}

\end{document}